\renewcommand*\backref[1]{\ifx#1\relax \else [Cit. on p. #1]. \fi}
\renewcommand{\gets}{\mathrel{\coloneqq}}
\numberwithin{equation}{section}
\theoremstyle{plain}
\newtheorem{theorem}{Theorem}[section]
\newtheorem*{theorem*}{Theorem}
\newtheorem{definition}[theorem]{Definition}
\newtheorem*{definition*}{Definition}
\newtheorem{lemma}[theorem]{Lemma}
\newtheorem*{lemma*}{Lemma}
\long\def\ie{\textit{i.e.}}
\long\def\eg{\textit{e.g.}}
\long\def\cf{\textit{cf.}}
\long\def\viz{\textit{viz.}}
\long\def\Prp{\emph{Problem} $\mathcal{P}_{\mathrm{ext}}$}
\renewenvironment{proof}[1][\proofname]{%
	\par\pushQED{\qed}\normalfont%
	\topsep6\p@\@plus6\p@\relax
	\trivlist\item[\hskip\labelsep\bfseries#1\@addpunct{.}]%
	\ignorespaces
}{%
	\popQED\endtrivlist\@endpefalse
}
\newcommand{\bR}{\mathbb{R}}
\newcommand{\bC}{\mathbb{C}}
\newcommand{\bZ}{\mathbb{Z}}
\newcommand{\bN}{\mathbb{N}}
\newcommand{\cE}{\mathcal{E}}
\newcommand{\cV}{\mathcal{V}}
\newcommand{\cG}{\mathcal{G}}
\newcommand{\cH}{\mathcal{H}}
\newcommand{\cW}{\mathcal{W}}
\newcommand{\Rint}{\mathring{R}}
\newcommand{\Real}[1]{\mathfrak{Re}\left( #1 \right)}
\newcommand{\tps}[1]{\ensuremath{{#1}^{\top}}}
\def\articletitle{Propagation of Waves from Finite Sources Arranged in Line Segments within an Infinite Triangular Lattice}
\def\articleauthorD{David~Kapanadze}
\def\articleauthorV{Zurab~Vashakidze}
\title{\articletitle}
\date{} 					
\author{
        \hspace{1mm}{\articleauthorD}\,\orcidlink{0000-0002-5006-8615} \\
	A. Razmadze Mathematical Institute, TSU, \\ 
        Merab Aleksidze II Lane 2, Tbilisi 0193, Georgia \\[6pt]
	Free University of Tbilisi, \\
        240 David Aghmashenebeli Alley, Tbilisi 0159, Georgia \\
	\href{mailto:david.kapanadze@gmail.com}{\texttt{david.kapanadze@gmail.com}}\\
	\And
	\hspace{1mm}{\articleauthorV}\,\orcidlink{0000-0001-8736-6213} \\
	The University of Georgia (UG), \\
	77a M. Kostava St., Tbilisi 0171, Georgia \\[6pt]
	Ilia Vekua Institute of Applied Mathematics (VIAM), TSU, \\
	2 University St., Tbilisi 0186, Georgia \\
	\href{mailto:zurab.vashakidze@gmail.com}{\texttt{zurab.vashakidze@gmail.com}}, \href{mailto:z.vashakidze@ug.edu.ge}{\texttt{z.vashakidze@ug.edu.ge}}
}
\begin{document}
\maketitle
\vfill
\begin{abstract}
    This paper examines the propagation of time-harmonic waves in a two-dimensional triangular lattice with a lattice constant $a = 1$. The sources are positioned along line segments within the lattice. Specifically, we investigate the discrete Helmholtz equation with a wavenumber $k \in \left( 0,2\sqrt{2} \right)$, where input data is prescribed on finite rows or columns of lattice sites. We focus on two main questions: the efficacy of the numerical methods employed in evaluating the Green's function, and the necessity of the cone condition. Consistent with a continuum theory, we employ the notion of radiating solution and establish a unique solvability result and Green's representation formula using difference potentials. Finally, we propose a numerical computation method and demonstrate its efficiency through examples related to the propagation problems in the left-handed two-dimensional inductor-capacitor metamaterial.
\end{abstract}
\vfill
\keywords{{discrete Helmholtz equation} \and {exterior Dirichlet problem} \and {metamaterials} \and {triangular lattice model}.}

\msc{{78A45} \and {35J05} \and {39A14} \and {39A60} \and {74S20}.}

\section{Introduction}\label{sec:intro}
We are interested in methods for solving the Dirichlet-type boundary value problem for the discrete Helmholtz equation with uniform fixed mesh spacing. This equation commonly arises in lattice dynamics, where the difference equations can model wave propagation. Similar static issues can occur in various scenarios. For instance, a study of lattice defects can be found in Ref. \cite{PhysRevB.46.10613}. Lattice dynamics traditionally finds applications in the field of solid-state physics \cite{Maradudin1963}. One can study the propagation of waves through a periodic arrangement of interacting cells, where each cell has the same arrangement of interacting atoms. The propagation of waves through such a perfect lattice is a well-known topic \cite{BornHuang1998}. If there is a defect in the lattice, it will cause the waves to scatter. Wave propagation through discrete structures remains an active area of research today.

The triangular lattice is one of the five two-dimensional Bravais lattice types and appears naturally in applications, especially in crystals and materials with hexagonal symmetry \cite{BornHuang1998,burke1966origins}. For example, graphene, as observed in nature, is a two-dimensional material comprising a monolayer of carbon atoms arranged in a honeycomb structure, which can be described as a hexagonal lattice with a two-atom basis (see \cite{RevModPhys.81.109,RevModPhys.83.837}). According to the study presented in \cite{KapPes2024}, for certain configurations, problems defined on the hexagonal lattice can be equivalently reduced to analogous problems formulated on the triangular lattice. Additionally, one can consider two-dimensional passive propagation media, such as a host microstrip line network periodically loaded with series capacitors and shunt inductors for signal processing and filtering (as depicted in \hyperref[fig:frag_triang_lattice]{Figure \ref*{fig:frag_triang_lattice}}). This type of inductor-capacitor lattice is called a negative-refractive-index transmission-line (NRI-TL) metamaterial \cite{Caloz2005} or simply left-handed two-dimensional metamaterial. Suppose that monochromatic inputs are applied to finite rows/columns of lattice sites. Assume that the number of unit cells in this slab is large enough to make it prohibitively expensive to solve numerically for the voltage/current at every cell in the lattice until the system reaches a steady state. As a simplifying strategy, it can be expected that the limiting case, when the lattice is effectively infinite, is more flexible to analysis and provides a good approximation of the steady-state output at an exterior boundary.

The discrete Helmholtz equation is a fundamental equation in physics, engineering, and mathematics, widely employed in various scientific disciplines to describe wave propagation in discrete media. Its applicability extends to fields such as electromagnetics, acoustics, optics, and quantum mechanics. Additionally, discrete Helmholtz equations are closely related to discrete Schr\"{o}dinger equations, which naturally arise in the tight-binding model of electrons in crystals \cite{Bloch1929,Slater1954,harrison1989electronic}. Similar equations also arise in studies involving time-harmonic elastic waves in lattice models of crystals \cite{Brillouin1953SecondEd,Maradudin1963,Lifshitz1966}. Moreover, the Helmholtz equation is closely related to the Maxwell system (for time-harmonic fields); solutions of the scalar Helmholtz equation are used to generate solutions of the Maxwell system (Hertz potentials), and every component of the electric and magnetic field satisfies an equation of Helmholtz type (see Kirsch and Hettlich \cite{Kirsch2015}). Continuous models for the Helmholtz equation with smooth boundaries have been thoroughly studied by Colton and Kress \cite{ColtonKress2019} and in related works. However, the formulation of a discrete analogue of the Rayleigh-Sommerfeld scattering theory across different lattice structures remains an active area of research. Results pertaining to square and triangular lattices have been previously obtained in \cite{Kapanadze2018,KapanadzePesetskaya2023}. 

This paper extends our investigation of the discrete Helmholtz equation and its associated exterior problems in a two-dimensional triangular lattice with a lattice constant $a = 1$. We concentrate on two central questions: the efficacy of the numerical technique employed to compute the lattice Green's function and the requirement of the cone condition. To this end, the effect of finite sources arranged on line segments within an infinite triangular lattice has been studied. Mathematical modelling of the propagation problem under consideration leads us to study an exterior problem for a two-dimensional discrete Helmholtz equation with Dirichlet boundary conditions. We use the results from \cite{KaPes2023} and carry out our investigation without passing to the complex wavenumber. Specifically, we employ the radiation conditions and asymptotic estimates from \cite{Kapanadze2021}, alongside the Rellich-Vekua type theorem stated by Isozaki and Morioka \cite{IsozakiMorioka2014}. It is worth noting that these problems can be explored through the limiting absorption principle, {\cf} {\eg} Sharma in Ref. \cite{Sharma2016}. This work delves into the diffraction phenomena occurring in triangular and hexagonal lattices caused by finite cracks and rigid constraints. We establish the existence and uniqueness of solutions for the problem under consideration within the interval $k \in \left( 0,2\sqrt{2} \right)$. Moreover, we derive Green's representation formula for the solution using difference potentials. The proposed approach aims to simplify the solution of the prescribed boundary value problem by solving a linear system of boundary equations. Furthermore, the solution to the original problem is represented by a linear combination of lattice Green's functions and the solution derived from a reduced linear system of boundary equations. The precision of our solution is mostly linked to the accurate computation of lattice Green's functions. To achieve this objective, instead of using the numerical quadrature technique for computing lattice Green's functions (due to the rapid oscillatory behaviour of these functions), we adopt the approach proposed by Berciu and Cook \cite{Berciu2010}. This method enables the computation of Green's functions through elementary operations alone, thus avoiding the need for integration, and yielding physically consistent solutions. For numerical illustration, two sample problems are considered for $k = 2$. The first example features four discrete boundary points aligned along a single line, while the second comprises ten discrete boundary points situated on parallel line segments. Numerical results are detailed in \hyperref[sec:numresalts]{Section \ref*{sec:numresalts}}.

\section{Formulation of the problem}\label{sec:problem_form}
In accordance with customary mathematical notation, we denote the sets of integers, positive integers, real numbers, and complex numbers by the symbols $\bZ$, $\bN$, $\bR$, and $\bC$, respectively.

Let $\left\{ \cV,\cE \right\}$ be a periodic simple graph that defines a two-dimensional infinite triangular lattice $\mathfrak{T}$, where
\begin{equation}\label{eq:Vmain}
    \cV = \left\{ T\left( x_1,x_2 \right) \in \mathbb{R}^{2} : \left( x_1,x_2 \right) \in {\bZ}^{2} =\bZ \times \bZ \right\}\,,
\end{equation}
denotes the vertex set, while the edge set is denoted by $\mathcal{E}$. The endpoints $\left( v,w \right) \in \cV \times \cV$ are considered adjacent if and only if $\left| v - w \right| = 1$. Additionally, let $T$ denote a two-dimensional coordinate transformation, which is defined as follows:
\begin{equation*}
    T\left( x_1,x_2 \right) = \left( x_1 + \frac{x_2}{2},\frac{x_2\sqrt{3}}{2} \right)\,.
\end{equation*}

\begin{figure}[H]
    \centering
    \includegraphics[width=\textwidth,height=\textheight,keepaspectratio]{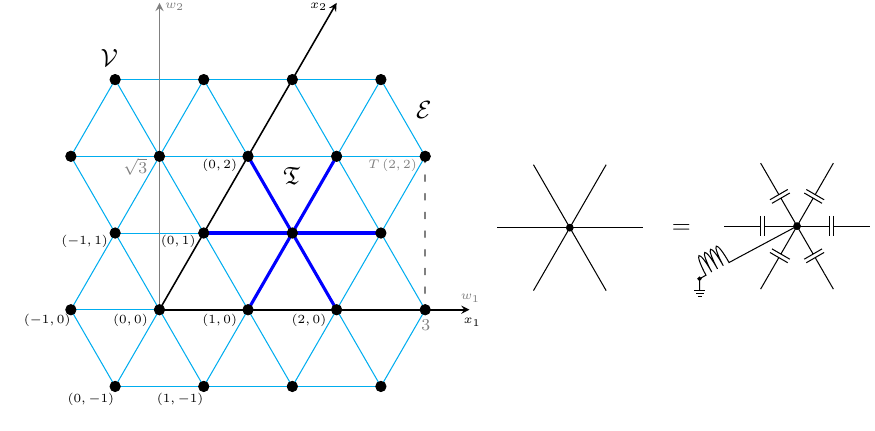}
    \caption{A fragment of a two-dimensional infinite triangular lattice, demonstrating its conceptual sense.}
    \label{fig:frag_triang_lattice}
\end{figure}
In \hyperref[fig:frag_triang_lattice]{Figure \ref*{fig:frag_triang_lattice}}, a fragment of a two-dimensional infinite triangular lattice $\mathfrak{T}$ is illustrated. The connection between $x = \left( {x}_{1},{x}_{2} \right) \in \bZ^2$ and the Euclidean coordinates of the vertexes (black dots) is established through the mapping $\left( {x}_{1},{x}_{2} \right) \mapsto \left( {x}_{1} + {x}_{2} / 2,\sqrt{3}{x}_{2} / 2 \right)$. For illustrative purposes, \hyperref[fig:frag_triang_lattice]{Figure \ref*{fig:frag_triang_lattice}} additionally depicts the lattice vertex $T\left( 2,2 \right)$, with Cartesian coordinates $\left( 3,\sqrt{3} \right)$. It is derived from the point $\left( 2,2 \right) \in \bZ^2$ through the coordinate transformation $T$. The nearest neighbour interactions on the triangular lattice are depicted with thick blue lines. Conceptually, the triangular lattice can be understood as a two-dimensional left-handed metamaterial consisting of inductor-capacitor elements. Specifically, it can be likened to a periodically loaded host transmission line with series capacitors and shunt inductors.

We define the $6$-neighbourhood ${F}_{w}^{0}$ as the set of points $v \in \cV$ such that the distance between $w$ and $v$ is equal to $1$, where $w \in \cV$. Additionally, we define the neighbourhood ${F}_{w}$ as the union of ${F}_{w}^{0}$ and the singleton set containing $w$. Further, a region $R \subset \cV$ can be identified as a set consisting of points in $\cV$, where there exist disjoint nonempty subsets $\Rint$ and $\partial R$ of $R$ satisfying the following condition:
\begin{enumerate}[label={(\alph*)}]
    \item $R = \Rint \cup \partial R$,
    \item if $w\in \Rint$ then ${F}_{w} \subset R$,
    \item if $w \in \partial R$ then there is at least one point $v \in {F}_{w}^{0}$ such that $v \in \Rint$.
\end{enumerate}

Since subsets $\Rint$ and $\partial R$ are not uniquely defined by $R$, we assume that for a given region $R$ in $\cV$, $\Rint$ and $\partial R$ are given and fixed. We define a point $w$ as an interior or boundary point of $R$ if $w \in \Rint$ or $w \in \partial R$, respectively. Additionally, we define a region $R \subset \cV$ as connected if for any $v, \tilde{v} \in R$, there exists a sequence ${w}^{\left( 1 \right)}, \dots, {w}^{\left( n \right)} \in R$ such that ${w}^{\left( 1 \right)} = v$, ${w}^{\left( n \right)} = \tilde{v}$, and for all $0 \leq i \leq n-1$, $\left| {w}^{\left( i \right)} - {w}^{\left( i + 1 \right)} \right| = 1$. By definition, a region $R$ with a single interior point $w$ is connected and coincides with ${F}_{w}$.

Let $\Gamma_j$, where $j = 1,2,\dots,n$ and $n \in \mathbb{N}$, denote a finite row of lattice sites. Consider a region $\mathring{\cW}$ defined as the set difference $\mathring{\cW}=\cV \setminus \partial \cW$, where $\partial \cW$ is the union of the sets $\Gamma_j$, for $j = 1,2,\dots,n$. We assume that the lattice sites $\Gamma_j$ are arranged in a manner such that $\mathring{\cW}$ is a connected region and satisfies the cone condition, as defined in reference \cite{KaPes2023}. Lastly, we emphasize that $\cW$ is the union of $\mathring{\cW}$ and $\partial \cW$, and that $\mathcal{V}$ and $\cW$ are equivalent as sets.

Given the problem and assumptions stated in the introduction, we consider a system in which each node $w \in \mathring{\cW}$ is connected to a common ground plane via an inductor, and each node $w \in \mathring{\cW}$ is connected to its six nearest neighbours via a capacitor. We assume that all inductances are equal to a positive constant $L$, and all capacitances are equal to a positive constant $C$. Under these conditions, Kirchhoff's laws of voltage and current imply a second-order differential equation for the voltage $U\left( w,t \right)$ across the inductor at node $w$, as follows:
\begin{equation}\label{eq:inductor-capacitor_probl}
    {L}{C}\frac{{\mathrm{d}}^{2}}{\mathrm{d}{t}^{2}}\left( \Delta_{d}U\left( w,t \right) \right) = U\left( w,t \right)\,.
\end{equation}
Here, the discrete Laplacian operator $\Delta_d$ is represented by a $7$-point stencil and can be expressed as:
\begin{equation*}
    \Delta_{d} U\left( w,t \right) = \sum_{v \in {F}_{w}^{0}}U\left( v,t \right) - 6U\left( w,t \right)\,.
\end{equation*}

It should be noted that equation \eqref{eq:inductor-capacitor_probl} is valid for all $w \in \mathring{\cW}$, and the boundary of the region $\cW$, denoted by $\partial \cW$, is subject to a time-dependent boundary condition specified by a function $f\left( v \right)$, where $v$ denotes a node on the boundary. The boundary condition is given by:
\begin{equation}\label{eq:bound_cond_inductor-capacitor}
    U\left( v,t \right) = f\left( v \right){e}^{-\imath \omega t}\,.
\end{equation}
It is important to note that $\imath$ is used to denote the imaginary unit, and $f: \partial \cW \to \bC$ is a given function. Under the assumption that at time $t = 0$, $U\left( w,t \right)$ and all its derivatives are zero for all $w \in \mathring{\cW}$, a wave propagates into the lattice as $t$ increases due to the time-dependent boundary condition, eventually leading the system to a steady state. In this steady state, the solution takes the form $U\left( w,t \right) = u\left( w \right){e}^{-\imath \omega t}$, where $u\left( w \right)$ is a complex-valued function that is constant in time. By substituting this expression into equations \eqref{eq:inductor-capacitor_probl} and \eqref{eq:bound_cond_inductor-capacitor}, which correspond to the discrete Helmholtz equation in $\cW$, we arrive at the following problem:
\begin{subequations}
    \begin{alignat}{2}
        \left( \Delta_{d} + k^2 \right)u\left( w \right) &= 0\,, &\quad \text{in}& ~ \mathring{\cW}\,,\label{eq:H1} \\
        u\left( v \right) &= f\left( v \right)\,, &\quad \text{on}& ~ \partial \cW\,.\label{eq:H2}
    \end{alignat}
\end{subequations}
The problem \eqref{eq:H1}-\eqref{eq:H2} involves the Laplacian operator $\Delta_{d}$ and a parameter $k$ that is related to the frequency $\omega$ of the wave through the equation ${k}^{2} = {\left( {\omega}^{2}{L}{C} \right)}^{-1}$, where $L$ and $C$ are the inductance and capacitance of the lattice, respectively.

To enhance efficiency, we have opted to work within a simplified coordinate system that is denoted by ${\bZ}^{2}$, as employed in \eqref{eq:Vmain}. Specifically, we represent each node $w$ within the set $\cW$ by a pair of integers, $\left( x_1, x_2 \right)$. Within this system, every node is associated with six neighbours: $\left( x_1 - 1, x_2 \right)$, $\left( x_1 + 1, x_2 \right)$, $\left( x_1, x_2 - 1 \right)$, $\left( x_1, x_2 + 1 \right)$, $\left( x_1 + 1, x_2 - 1 \right)$, and $\left( x_1 - 1, x_2 + 1 \right)$. Consequently, we assume all definitions that have been previously introduced with respect to content using integer coordinates, without explicit mention.

Let us reformulate equation \eqref{eq:inductor-capacitor_probl} within the context of the selected coordinate system. To be precise, we set $U\left( x,t \right) = U\left( w,t \right)$, where $x = \left( x_1,x_2 \right) \in {\bZ}^{2}$ and $w = T\left( x_1,x_2 \right) \in \mathcal{W}$. With these definitions, we obtain:
\begin{equation}\label{eq:inductor-capacitor_probl_x}
    {L}{C}\frac{{\mathrm{d}}^{2}}{\mathrm{d}{t}^{2}}\left( \Delta_{d}U\left( x,t \right) \right) = U\left( x,t \right)\,.
\end{equation}
If we reformulate the boundary condition \eqref{eq:bound_cond_inductor-capacitor} in the coordinate system ${\bZ}^{2}$ and follow the same reasoning as above, we obtain the following boundary value problem of Dirichlet-type for the time-harmonic discrete waves in $\Omega = \mathring{\Omega} \cup \partial \Omega \subset {\bZ}^{2}$
\begin{subequations}
    \begin{alignat}{2}
        \left( \Delta_{d} + k^2 \right)u\left( x \right) &= 0\,, &\quad \text{in}& ~ \mathring{\Omega}\,,\label{eq:Helmholtz} \\
        u\left( y \right) &= f\left( y \right)\,, &\quad \text{on}&~ \partial \Omega\,,\label{eq:H3}
    \end{alignat}
\end{subequations}
where the discrete Laplacian operator is expressed as (its action is represented by a stencil as illustrated in \hyperref[fig:7-point_stencil_of_Laplacian]{Figure \ref*{fig:7-point_stencil_of_Laplacian}})
\begin{equation*}
    \begin{aligned}
        {\Delta}_{d} u\left( x \right) =& u\left( x + e_1 \right) + u\left( x - e_1 \right) + u\left( x + e_2 \right) + u\left( x - e_2 \right) \\
        &+ u\left( x + e_1 - e_2 \right) + u\left( x - e_1 + e_2 \right) - 6u\left( x \right)\,,
    \end{aligned}
\end{equation*}
\begin{figure}[H]
    \centering
    \includegraphics[width=0.30\textwidth]{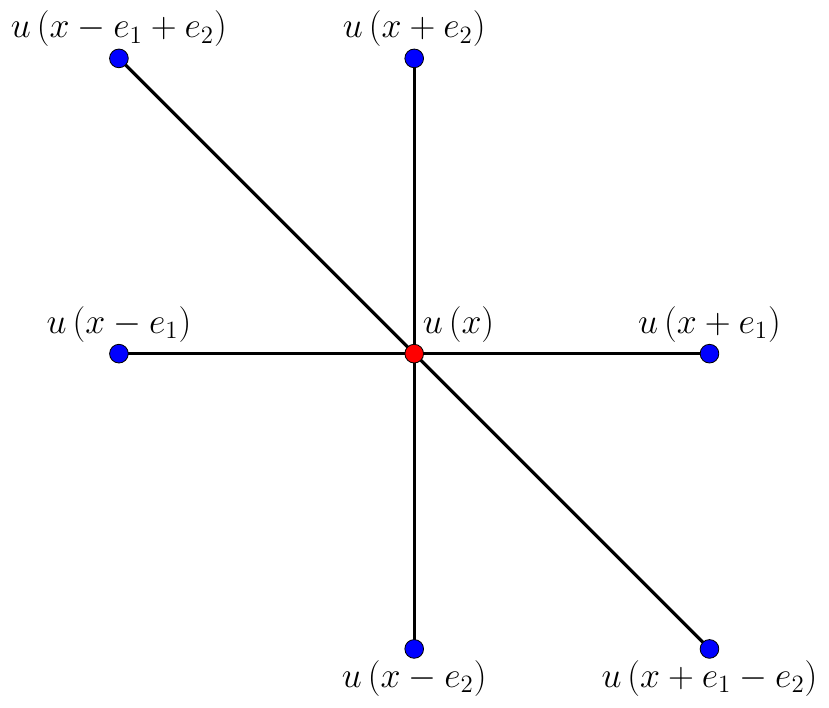}
    \caption{$7$-point stencil of the discrete Laplacian operator ${\Delta}_{d}$ in the coordinate system ${\bZ}^{2}$.}
    \label{fig:7-point_stencil_of_Laplacian}
\end{figure}
\noindent $e_1 = \left( 1,0 \right)$ and $e_2 = \left( 0,1 \right)$ denote the standard basis vectors of ${\bZ}^{2}$, representing the directions of the positive $x_1$ and $x_2$ axes, respectively. $\mathring{\Omega}$ denotes the interior of the domain, and $\partial \Omega$ denotes the boundary of the domain. The function $f: \partial \Omega \rightarrow {\bC}$ is a given boundary condition. Note that the coordinate system ${\bZ}^{2}$ is used to represent the nodes of the discrete system, and all previously introduced definitions apply in this context.

It is established that equation \eqref{eq:inductor-capacitor_probl} permits plane wave solutions $U\left( x \right) = {A}{e}^{-\imath\left( {\xi}_{1}{x}_{1} + {\xi}_{2}{x}_{2} + {\omega}{t} \right)}$, where $A \in \bC$ denotes a constant, provided that the ensuing dispersion relation holds:
\begin{equation*}
    {\omega}^{2} = \dfrac{1}{{4}{L}{C}\left({\sin}^{2}\frac{\xi_1}{2} + {\sin}^{2}\frac{\xi_2}{2} + {\sin}^{2}\frac{\xi_1 - \xi_2}{2} \right)}\,.
\end{equation*}
Assuming the fulfilment of the dispersion relation for $\left( \xi_1, \xi_2 \right) \in {\left[ -\pi, \pi \right]}^{2}$, identified as the first Brillouin zone (see \cite{Brillouin1953SecondEd}). Consequently, the following holds:
\begin{equation*}
    {k}^{2} = {4}\left({\sin}^{2}\frac{\xi_1}{2} + {\sin}^{2}\frac{\xi_2}{2} + {\sin}^{2}\frac{\xi_1 - \xi_2}{2} \right) \in \left[ 0,9 \right]\,.
\end{equation*}
Undoubtedly, other values of $k$ are also subject to investigation, but these cases are relatively straightforward and are not addressed within the scope of this discussion.

As a reminder, the spectrum of the negative discrete Laplacian is known to be absolutely continuous within the interval $\left[ 0,9 \right]$. However, there exists a set of exceptional points $\left\{ 0,8,9 \right\}$ within this interval where the limiting absorption principle fails, as detailed in \cite{ShabanVainberg2001}. Therefore, it is assumed that $k \in \left( 0,3 \right) \setminus \left\{ 2\sqrt{2} \right\}$.

The objective of this study is to investigate the existence and uniqueness of the function $u:\Omega \to \mathbb{C}$ that satisfies the discrete Helmholtz equation \eqref{eq:Helmholtz} with $k \in \left( 0,2\sqrt{2} \right)$ and the boundary condition \eqref{eq:H3}. This problem shall be denoted as {\Prp} in the following discussion.

\section{Green's representation formula}\label{sec:Green's_represent_form}
This section primarily recollects the results from \cite{KaPes2023,KapanadzePesetskaya2023,Kapanadze2021}. Let $R$ be a region in the two-dimensional integer lattice ${\bZ}^2$. For $j = 1,\dots,6$, let ${\left( \partial R \right)}_{j}$ denote the set of all boundary points $y \in \partial R$ such that $y - {e}_{j} \in \Rint$, where $\Rint$ denotes the interior of $R$. We refer to ${\left( \partial R \right)}_{j}$ as the $j$-th side of the boundary of $R$. Note that $\partial R$ is the union of its six sides, {\ie}, $\partial R = {\cup}_{j = 1}^{6} {\left( \partial R \right)}_{j}$. Although a boundary point $y$ can belong to all six sides of $R$, it will always be evident from our subsequent arguments which side needs to be considered. Subject to this clarification, we define the discrete derivative in the outward normal direction ${e}_{j}$, $j = 1,\dots,6$ as follows:
\begin{equation}\label{eq:T_operator}
    \mathcal{T} u\left( y \right) = u\left( y \right) - u\left( y - {e}_{j} \right)\,,\quad y \in (\partial R)_j\,,
\end{equation}
where ${e}_{3} = {e}_{1} - {e}_{2}$, ${e}_{4} = -{e}_{1}$, ${e}_{5} = -{e}_{2}$ and ${e}_{6} = -{e}_{3}$.

Let ${H}_{0} = \left\{ \left( 0,0 \right) \right\}$, and for each $N \in \bN$, define ${H}_{N}$ by the recurrence formula
\begin{equation*}
    {H}_{N} \coloneqq \bigcup_{x \in {H}_{N-1}}{F}_{x}\,,
\end{equation*}
here $\mathring{H}_{N} \coloneqq {H}_{N-1}$ and ${\left( \partial H \right)}_{N} \coloneqq {H}_{N} \setminus \mathring{H}_{N}$.

Let $R$ be a finite region, and let $R = \bigcup_{x \in \Rint}{F}_{x}$ be a representation of $R$. Then we have discrete analogues of both Green's first and second identities, given by:
\begin{equation}\label{eq:GRfirst}
    \sum_{x \in \Rint}\left( {\nabla}_{d}^{+} u\left( x \right) \cdot {\nabla}_{d}^{+} v\left( x \right) + {\nabla}_{d}^{-} u\left( x \right) \cdot {\nabla}_{d}^{-} v\left( x \right) + u\left( x \right){\Delta}_{d} v\left( x \right) \right) = \sum_{y \in \partial R} u\left( y \right) \mathcal{T} v\left( y \right)\,,
\end{equation}
and
\begin{equation}\label{eq:GRsecond}
    \sum_{x \in \Rint}\left( u\left( x \right){\Delta}_{d} v\left( x \right) - v\left( x \right){\Delta}_{d} u\left( x \right) \right) = \sum_{y \in \partial R} \left( u\left( y \right)\mathcal{T}v\left( y \right) - v\left( y \right)\mathcal{T}u\left( y \right) \right)\,,
\end{equation}
respectively. Here, ${\nabla}_{d}^{+}$ and ${\nabla}_{d}^{-}$ are defined as follows:
\begin{equation*}
    {\nabla}_{d}^{+} \coloneqq 
    \begin{pmatrix}
        u\left( x + {e}_{1} \right) - u\left( x \right) \\
        u\left( x + {e}_{2} \right) - u\left( x \right) \\
        u\left( x + {e}_{3} \right) - u\left( x \right)
    \end{pmatrix}\,,
\end{equation*}
and
\begin{equation*}
    {\nabla}_{d}^{-} \coloneqq 
    \begin{pmatrix}
        u\left( x - {e}_{1} \right) - u\left( x \right) \\
        u\left( x - {e}_{2} \right) - u\left( x \right) \\
        u\left( x - {e}_{3} \right) - u\left( x \right)
    \end{pmatrix}\,.
\end{equation*}

Let $\cG\left( x;y \right)$ denote Green's function for the discrete Helmholtz equation \eqref{eq:Helmholtz} centred at $y$ and evaluated at $x$. Then, the function $\cG\left( x;y \right)$ satisfies the following equation:
\begin{equation}\label{eq:greens}
    \left( \Delta_{d} + k^2 \right)\cG\left( x;y \right) = \delta_{x,y}\,,
\end{equation}
where $\delta_{x,y}$ represents the Kronecker delta. To simplify the notation, we denote $\cG\left( x;0 \right)$ as $\cG\left( x \right)$. It is important to note that $\cG\left( x;y \right)$ can be represented as $\cG\left( x - y \right)$.

The lattice Green's function $\cG$ is a well-known concept and has been extensively studied in the literature, see: \cite{Economou2006,KatsuraInawashiro1971,Martin2006,Zemla1995}. By employing the discrete Fourier and the inverse Fourier transforms, we can obtain the following result:
\begin{equation}\label{eq:gmn}
    \cG\left( x \right) = \frac{1}{4\pi^2}\int\limits_{-\pi}^{\pi}\int\limits_{-\pi}^{\pi}\frac{e^{\imath\left( x\cdot \xi \right)}}{\sigma\left( \xi;k^2 \right)}\,\mathrm{d}\xi\,,\quad \xi = \left( \xi_1,\xi_2 \right)\,,
\end{equation}
where
\begin{equation*}
    \begin{aligned}
        \sigma\left( \xi;k^2 \right) &= e^{\imath\xi_1} + e^{-\imath\xi_1} + e^{\imath\xi_2} + e^{-\imath\xi_2} + e^{\imath\xi_1}e^{-\imath\xi_2} + e^{-\imath\xi_1}e^{\imath\xi_2} - 6 + k^2 \\
        &= k^2 - 6 + 2\cos\xi_1 + 2\cos\xi_2 + 2\cos\left( \xi_1 - \xi_2 \right) \\
        &= k^2 - 4\left({\sin}^{2}\frac{\xi_1}{2} + {\sin}^{2}\frac{\xi_2}{2} + {\sin}^{2}\frac{\xi_1 - \xi_2}{2} \right)\,.
    \end{aligned}
\end{equation*}

The lattice Green's function $\cG$ is a well-established concept when $k^2$ belongs to the set $\bC \setminus \left[ 0,9 \right]$ ({\cf}, {\eg}, \cite{Horiguchi1972}). Notably, if $k^2 \in \bC \setminus \left[ 0, 9 \right]$, then $\sigma \neq 0$ and $\cG$ in \eqref{eq:gmn} possesses a well-defined value. In such cases, the function $\cG\left( x \right)$ exhibits an exponential decay as $\left| x \right| \to \infty$.

When $k \in \left( 0, 2\sqrt{2} \right)$, we define the lattice Green's function as the point-wise limit of the following expression:
\begin{equation}\label{eq:pointwise}
    \left( R_{k^2 + \imath\varepsilon}\delta_{x,0} \right)\left( x \right) \coloneqq \frac{1}{4\pi^2}\int\limits_{-\pi}^{\pi}\int\limits_{-\pi}^{\pi}\frac{e^{\imath x\cdot\xi }\,\mathrm{d}\xi_1\mathrm{d}\xi_2}{\sigma\left( \xi;k^2 + \imath\varepsilon \right)}\,,
\end{equation}
as $k^2 + \imath\varepsilon$ approaches $k^2 + \imath 0$, we denote the resulting limit by $\cG\left( x \right)$, {\ie}, $\cG\left( x \right) = \left( R_{k^2 + \imath 0}\delta_{x,0} \right)\left( x \right)$, {\cf}, \cite{Kapanadze2021}. It is noteworthy that $\cG\left( x \right)$ satisfies the equation \eqref{eq:greens} and the following equalities for all $x = \left( x_1,x_2 \right) \in \mathbb{Z}^2$ (see \cite{KapanadzePesetskaya2023}):
\begin{equation}\label{eq:property_Green's_funct.}
    \cG\left( x_1,x_2 \right) = \cG\left( x_2,x_1 \right) = \cG\left( -x_1,-x_2 \right) = \cG\left( x_1 + x_2,-x_2 \right)\,.
\end{equation}

Let us reconsider the expression for the lattice Green's function \eqref{eq:gmn} with a different formulation. By employing the sum-to-product identities and double-angle formula for cosine within $\sigma\left( \xi;k^2 \right)$, we derive:
\begin{equation*}
    \sigma\left( \xi;k^2 \right) = {k}^{2} - 8 + 4 \cos\frac{\xi_{1} - \xi_{2}}{2}\left( \cos\frac{\xi_{1} + \xi_{2}}{2} + \cos\frac{\xi_{1} - \xi_{2}}{2} \right)\,.
\end{equation*}
If we introduce a change of variables as follows:
\begin{equation*}
    \eta_{1} = \frac{\xi_{1} + \xi_{2}}{2} \in \left[ -\pi,\pi \right]\,,\quad \eta_{2} = \frac{\xi_{1} - \xi_{2}}{2} \in \left[ -\pi,\pi \right]\,,
\end{equation*}
or equivalently,
\begin{equation*}
    \xi_{1} = \eta_{1} + \eta_{2}\,,\quad \xi_{2} = \eta_{1} - \eta_{2}\,,
\end{equation*}
we can define the function
\begin{equation*}
    \widetilde{\sigma}\left( \eta;k^2 \right) = {k}^{2} - 8 + 4\cos\eta_{2}\left( \cos\eta_{1} + \cos\eta_{2} \right)\,,\quad \eta = \left( {\eta}_{1},{\eta}_{2} \right)\,.
\end{equation*}
In this change of variables, the integration domain $\left[ -\pi,\pi \right] \times \left[ -\pi,\pi \right]$ is rotated by $\pi/2$ around the origin and compressed by a factor of $1/\sqrt{2}$. Consequently, the lattice Green's function, denoted by \eqref{eq:gmn}, can be expressed in terms of new variables ${\eta}_{1}$ and ${\eta}_{2}$ as follows
\begin{equation*}
    \cG\left( {x}_{1},{x}_{2} \right) = \frac{1}{4\pi^2}\int\limits_{-\pi}^{\pi}\int\limits_{-\pi}^{\pi}\frac{e^{\imath\left( x_{1}\left( \eta_{1} + \eta_{2} \right) +x_{2}\left( \eta_{1} - \eta_{2} \right) \right)}}{\widetilde{\sigma}\left( {\eta}_{1},{\eta}_{2};k^2 \right)}\,\mathrm{d}\eta_{2}\mathrm{d}\eta_{1}\,.
\end{equation*}
\begin{theorem}[see \cite{Kapanadze2021}]\label{theo:repr}
    Consider a finite region $R$ and a function $u: R \to \bC$. For any point, $x \in \Rint$, a discrete Green's representation formula can be established
    \begin{equation*}
        u\left( x \right) = \sum_{y \in \partial R}\bigl( u\left( y \right)\mathcal{T}\cG\left( x - y \bigr) - \cG\left( x-y \right)\mathcal{T}u\left( y \right) \right) + \sum_{y \in \Rint}\cG\left( x - y \right)\left( \Delta_{d} + k^2 \right) u\left( y \right)\,.
    \end{equation*}
    Furthermore, if $u$ satisfies the discrete Helmholtz equation
    \begin{equation*}
        \left( \Delta_d + k^2 \right)u\left( x \right) = 0\,,\quad \text{in}~ \Rint,
    \end{equation*}
    then the following representation formula holds:
    \begin{equation}\label{eq:repr}
        u\left( x \right) = \sum_{y \in \partial R}\bigl(u\left( y \right)\mathcal{T}\cG\left( x - y \right) - \cG\left( x - y \right)\mathcal{T}u\left( y \right) \bigr)\,.
    \end{equation}
    Recall that, here $\partial R$ refers to the boundary of a finite region $R$.
\end{theorem}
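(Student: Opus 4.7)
The plan is to apply the discrete Green's second identity \eqref{eq:GRsecond} to $u$ paired with the function $v:R \to \bC$ defined by $v(y) := \cG(x-y)$, where $x \in \Rint$ is held fixed throughout. The key observation is that, since $k^{2}$ is a scalar, adding the trivially vanishing quantity $k^{2}\sum_{y \in \Rint}\bigl(u(y)\cG(x-y) - \cG(x-y)u(y)\bigr)$ to the left-hand side of \eqref{eq:GRsecond} converts both Laplacians into Helmholtz operators without altering the identity, yielding
\begin{equation*}
\sum_{y \in \Rint}\bigl(u(y)(\Delta_{d}+k^{2})\cG(x-y) - \cG(x-y)(\Delta_{d}+k^{2})u(y)\bigr) = \sum_{y \in \partial R}\bigl(u(y)\mathcal{T}\cG(x-y) - \cG(x-y)\mathcal{T}u(y)\bigr).
\end{equation*}

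Next I would observe that, because the triangular-lattice stencil is invariant under $e_{j} \mapsto -e_{j}$ (equivalently, by the reflection symmetry $\cG(-z)=\cG(z)$ supplied by \eqref{eq:property_Green's_funct.}), the discrete Laplacian acting in the $y$-variable on $\cG(x-y)$ equals $(\Delta_{d}\cG)(x-y)$; combined with the defining relation \eqref{eq:greens} this gives $(\Delta_{d}+k^{2})\cG(x-y)=\delta_{x,y}$ as a function of $y$. The sifting property of the Kronecker delta then collapses the first interior sum to $u(x)$ (which uses $x \in \Rint$), and rearranging produces the general representation formula. The homogeneous case \eqref{eq:repr} follows immediately: if $(\Delta_{d}+k^{2})u \equiv 0$ on $\Rint$, the remaining interior sum vanishes identically.

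The main obstacle I anticipate is of a bookkeeping nature rather than a conceptual one. First, one must confirm that Green's second identity \eqref{eq:GRsecond} — stated for finite regions of the form $R = \bigcup_{x \in \Rint}F_{x}$ — applies to the given finite region $R$; this is guaranteed by the defining properties (a)--(c) of a region, in particular the requirement $F_{y} \subset R$ for every $y \in \Rint$, which is also precisely what makes the seven-point expression $\Delta_{d}^{(y)}\cG(x-y)$ well-defined at every interior node. A secondary subtlety is that $\mathcal{T}$ is defined piecewise along the six sides $(\partial R)_{j}$ via \eqref{eq:T_operator}, so at boundary points belonging to more than one side the convention fixed earlier in the paper must be used consistently in both summands $u(y)\mathcal{T}\cG(x-y)$ and $\cG(x-y)\mathcal{T}u(y)$. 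Once these issues are settled, the remainder of the argument reduces to a direct rearrangement of terms.
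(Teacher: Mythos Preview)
The paper does not supply its own proof of this theorem; it is stated with the attribution ``see \cite{Kapanadze2021}'' and no argument is given in the text. Your derivation is the standard one and uses exactly the two ingredients the paper has already recorded for this purpose --- the discrete Green's second identity \eqref{eq:GRsecond} and the defining relation \eqref{eq:greens} for $\cG$ --- so it is precisely the proof one expects the cited reference to contain. The bookkeeping issues you flag (that the region axioms force $R=\bigcup_{y\in\Rint}F_{y}$, and that the side convention for $\mathcal{T}$ in \eqref{eq:T_operator} must be applied consistently to both boundary summands) are indeed the only points requiring care, and you have identified and handled them correctly.
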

Moreover, we must apply the concept of a radiation condition to the discrete Helmholtz operators. It should be emphasized that in the case where $k^2 \in \left( 0,8 \right)$, an extra condition at infinity is necessary ({\cf}, \cite{ShabanVainberg2001}). Specifically, a function $u: \Omega \to \bC$ satisfies the radiation condition at infinity if (see \cite{Kapanadze2021})
\begin{equation}\label{eq:radcond}
    \left\{
    \begin{aligned}
        u\left( x \right) &= \mathcal{O}\left( {\left| x \right|}^{ -\frac{1}{2} } \right)\,,\\
        u\left( x + {e}_{j} \right) &= e^{\imath {\xi}_{j}^{*}\left( \alpha,k \right)} u\left( x \right) + \mathcal{O}\left( {\left| x \right|}^{ -\frac{3}{2} } \right)\,,\quad j = 1,2\,.
    \end{aligned}
    \right.
\end{equation}
Here, the term remaining after the decay is uniform in all directions $x\,/\left| x \right|$, where $x$ is characterized by ${x}_{1} = \left| x \right|\cos\alpha$ and ${x}_{2} = \left| x \right|\sin\alpha$, $0 \leq \alpha < 2\pi$. Here, ${\xi}_{j}^{*}\left( \alpha,k \right)$ is the $j$-th coordinate of the point ${\xi}^{*}\left( \alpha,k \right)$, where ${\xi}^{*}\left( \alpha,k \right) = {\xi}^{*} = \left( {\xi}_{1}^{*},{\xi}_{2}^{*} \right)$ is the unique solution to the following system of equations:
\begin{align*}
    2\zeta\left( \sin\xi_1 + \sin\left( \xi_1 - \xi_2 \right) \right) &= \cos\alpha\,, \\
    2\zeta\left( \sin\xi_2 - \sin\left( \xi_1 - \xi_2 \right) \right) &= \sin\alpha\,, \\
    k^2 - 6 + 2\cos\xi_1 + 2\cos\xi_2 + 2\cos\left( \xi_1 - \xi_2 \right) &= 0\,,
\end{align*}
where $\zeta$ is a positive constant ({\cf}, \cite{Kapanadze2021}).
\begin{definition}\label{def:radiating}
    Let $k$ be an element of the open interval $\left( 0,2\sqrt{2} \right)$. Consider a solution $u$ to the discrete Helmholtz equation \eqref{eq:Helmholtz}, which is a second-order difference equation used to model wave phenomena in a triangular lattice. We say that $u$ is a radiating solution if it satisfies the radiation condition \eqref{eq:radcond}, which ensures that the solution decays at infinity and represents a physically realistic wave field.
\end{definition}

Observe that the second condition presented in \eqref{eq:radcond} can be formulated in a subsequent manner:
\begin{equation*}
    u\left( x \right) - u\left( x + e_j \right) = \left( 1 - e^{\imath {\xi}_{j}^{*}\left( \alpha,k \right)} \right) u\left( x \right) + \mathcal{O}\left( {\left| x \right|}^{ -\frac{3}{2} } \right)\,,\quad \left| x \right| \to \infty\,,
\end{equation*}
and
\begin{equation*}
    u\left( x + e_j \right) - u\left( x \right) = \left( 1 - e^{-\imath {\xi}_{j}^{*}\left( \alpha,k \right)} \right) u\left( x + e_j \right) + \mathcal{O}\left( {\left| x \right|}^{ -\frac{3}{2} } \right)\,,\quad \left| x \right| \to \infty\,.
\end{equation*}
\begin{lemma}[For the detailed proof see \cite{KaPes2023}]\label{lemma:zeta}
    Suppose that $k^2$ belongs to the interval $\left( 0,8 \right)$, and consider a function $u$ that satisfies the radiation condition at infinity, as expressed by equation \eqref{eq:radcond}. For any point $y$ on the boundary of $H_N$, we have the following asymptotic expansion as $N$ tends to infinity:
    \begin{equation*}
        \mathcal{T}u\left( y \right) = \zeta\left( y,k \right) u\left( y \right) + \mathcal{O}\left({\left| y \right|}^{-\frac{3}{2}} \right)\,,\quad N \to \infty\,,
    \end{equation*}
    where $\zeta\left( y,k \right)$ is a complex-valued function satisfying $\mathrm{\Im m}\,\zeta\left( y,k \right) > 0$.
\end{lemma}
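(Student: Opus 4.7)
The plan is to start from $\mathcal{T}u(y) = u(y) - u(y - e_j)$ and read off $\zeta(y,k)$ from the radiation condition. For $y \in (\partial H_N)_j$ with $j = 1, 2$, setting $x = y - e_j$ in the second displayed reformulation of \eqref{eq:radcond} gives immediately
\begin{equation*}
    \mathcal{T}u(y) = \bigl(1 - e^{-i\xi_j^*(\alpha, k)}\bigr) u(y) + \mathcal{O}\bigl(|y|^{-3/2}\bigr),
\end{equation*}
since $|y - e_j| \asymp |y|$ as $|y| \to \infty$. This identifies $\zeta(y, k) = 1 - e^{-i\xi_j^*(\alpha, k)}$ on the first two sides.

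For the remaining indices $j = 3, 4, 5, 6$ the radiation condition \eqref{eq:radcond} must be iterated, since it is stated only for the directions $e_1$ and $e_2$. Writing $e_3 = e_1 - e_2$, $e_4 = -e_1$, $e_5 = -e_2$, $e_6 = -e_3$, I would shift in sequence by one or two of these standard directions to obtain
\begin{equation*}
    u(y + e_j) = e^{i\xi_j^*(\alpha, k)} u(y) + \mathcal{O}\bigl(|y|^{-3/2}\bigr)
\end{equation*}
with the natural assignment $\xi_3^* = \xi_1^* - \xi_2^*$, $\xi_4^* = -\xi_1^*$, $\xi_5^* = -\xi_2^*$, $\xi_6^* = \xi_2^* - \xi_1^*$. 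The unimodularity of $e^{i\xi_1^*},e^{i\xi_2^*}$, the decay $u(y) = \mathcal{O}(|y|^{-1/2})$, and the continuity of $\alpha \mapsto \xi^*(\alpha, k)$ (which absorbs the fact that consecutive shifts change the angular argument by $\mathcal{O}(|y|^{-1})$) together ensure that each iteration preserves the order of the remainder. Since $-e_j$ is again one of the six standard directions, the same manipulation applied to $u(y - e_j)$ yields uniformly in $j$
\begin{equation*}
    \zeta(y, k) = 1 - e^{-i\xi_j^*(\alpha, k)}, \qquad \mathrm{Im}\,\zeta(y, k) = \sin \xi_j^*(\alpha, k).
\end{equation*}

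The main obstacle is then the strict positivity $\sin \xi_j^*(\alpha, k) > 0$ on the $j$-th side. Here the outgoing character of the radiation condition is decisive: the defining system for $\xi^*(\alpha, k)$ with $\zeta > 0$ says exactly that $-\nabla_\xi \sigma(\xi^*; k^2)$ is a positive multiple of $(\cos\alpha, \sin\alpha)$, so the group velocity points toward the observer $y$. Combined with the geometry of $H_N$---a hexagonal region in $\bZ^2$ whose $j$-th side corresponds, as $N \to \infty$, to a prescribed angular sector in $\alpha$ centred on the outward direction $e_j$---this forces $\xi_j^*(\alpha, k) \in (0, \pi)$ on that sector. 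The restriction $k^2 \in (0, 8)$ is essential, as it guarantees that the dispersion curve $\{\sigma(\xi; k^2) = 0\}$ is smooth and closed and that the outgoing branch $\xi^*(\alpha, k)$ is unambiguously determined, thereby fixing the sign of $\sin \xi_j^*$ from that of the outward normal component of the group velocity.
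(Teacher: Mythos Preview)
The paper does not actually prove this lemma: it is stated with the parenthetical ``For the detailed proof see \cite{KaPes2023}'' and no argument is given beyond the two displayed reformulations of \eqref{eq:radcond} that immediately precede it. Your derivation of the asymptotic form $\mathcal{T}u(y)=(1-e^{-\imath\xi_j^{*}})u(y)+\mathcal{O}(|y|^{-3/2})$ uses exactly those reformulations (for $j=1,2$) and the natural two-step iteration (for $j=3,\dots,6$), so in that sense your route is precisely the one the paper sets up before deferring to the external reference.

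The one place your proposal remains heuristic is the strict positivity $\sin\xi_j^{*}(\alpha,k)>0$ on the $j$-th side of $\partial H_N$. You correctly identify that the defining system makes $-\nabla_\xi\sigma(\xi^{*};k^2)$ a positive multiple of $(\cos\alpha,\sin\alpha)$, but this gives the sign of the \emph{combinations} $\sin\xi_1^{*}+\sin(\xi_1^{*}-\xi_2^{*})$ and $\sin\xi_2^{*}-\sin(\xi_1^{*}-\xi_2^{*})$, not of $\sin\xi_j^{*}$ individually; the passage from ``group velocity points toward $y$'' and ``$\alpha$ lies in the sector of the $j$-th hexagonal side'' to ``$\xi_j^{*}\in(0,\pi)$'' needs an explicit analysis of the level curve $\{\sigma(\xi;k^2)=0\}$ and of which angular sectors the six sides of $\partial H_N$ actually occupy in the $\bZ^2$ coordinates. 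That analysis is exactly the content the paper outsources to \cite{KaPes2023}, so your sketch is at the same level of detail as the paper itself---but be aware that this is where the real work lies, not in reading off $\zeta$.
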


Consider a fixed point $x \in \bZ^2$, and let $y$ be any point on the boundary of $H_{N}$. The radiation conditions expressed by equation \eqref{eq:radcond} imply that
\begin{equation*}
    \sum_{y \in \partial H_{N}}\left( u\left( y \right)\mathcal{T}\cG\left( x - y \right) - \cG\left( x - y \right)\mathcal{T}u\left( y \right) \right) \to 0\,,\quad N \to \infty\,.
\end{equation*}
Undoubtedly, as an example, it can be observed that as $\left| y \right| \to \infty$, $\alpha\left( y - x \right)$ converges to $\alpha = \alpha\left( y \right)$ and $\zeta\left( y - x,k \right)$ converges to $\zeta\left( y,k \right)$, implying that for $N$ sufficiently large, the ensuing result can be obtained (see \cite{KaPes2023}):
\begin{align*}
    u\left( y \right)\mathcal{T}\cG\left( x - y \right) &- \cG\left( x - y \right)\mathcal{T}u\left( y \right) \\
    &= u\left( y \right) \cdot \mathcal{O}\left( {N}^{-\frac{3}{2}} \right) + \cG\left( x - y \right) \cdot \mathcal{O}\left( {N}^{-\frac{3}{2}} \right) = \mathcal{O}\left( {N}^{-2} \right)\,.
\end{align*}
Moreover, by applying \hyperref[theo:repr]{Theorem \ref*{theo:repr}} to $\Omega\cap {H}_{N}$, where $N \in \bN$ is sufficiently large, and subsequently taking the limit as $N \to \infty$, the ensuing Green's formula for a radiating solution $u$ of the discrete Helmholtz equation \eqref{eq:Helmholtz} can be derived:
\begin{equation}\label{eq:repre}
    u\left( x \right) = \sum_{y \in \partial \Omega}\left( u\left( y \right)\mathcal{T}\cG\left( x - y \right) - \cG\left( x - y \right)\mathcal{T}u\left( y \right) \right)\,.
\end{equation}

Based on the representation formula \eqref{eq:repre} and using the results established in \cite{Kapanadze2021}, it can be deduced that any radiating solution $u$ of the discrete Helmholtz equation \eqref{eq:Helmholtz} possesses the ensuing asymptotic expansion:
\begin{equation}\label{eq:far}
    u\left( x \right) = -\frac{{e}^{\imath \mu\left( \alpha,k \right)\left| x \right|}}{{\left| x \right|}^{\frac{1}{2}}}\left\{ {u}_{\infty}\left( \hat{x} \right) + \mathcal{O}\left( \frac{1}{\left| x \right|} \right) \right\}\,,\quad \left| x \right| \to \infty\,.
\end{equation}
Furthermore, let us define $\mu\left( \alpha,k \right) \coloneqq {\xi}^{*}\left( \alpha,k \right) \cdot \hat{x}$, where $\hat{x} \coloneqq x\,/\left| x \right|$ and ${\xi}^{*}\left( \alpha,k \right) = \left( {\xi}_{1}^{*}\left( \alpha,k \right),{\xi}_{2}^{*}\left( \alpha,k \right) \right)$. The far-field pattern of $u$ denoted as ${u}_{\infty}\left( \hat{x} \right)$, can be expressed through the employing of the formula $\left( 12 \right)$ introduced in \cite{Kapanadze2021}.

Now, based on the previous results, we are ready to state the following proposition:
\begin{theorem}\label{th:uni}
    The {\Prp} possesses, at most, a single solution that exhibits radiation properties.
\end{theorem}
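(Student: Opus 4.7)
The plan is to carry out the standard Rellich-type uniqueness argument adapted to the discrete setting. By linearity, it suffices to show that any radiating solution $u$ of the discrete Helmholtz equation \eqref{eq:Helmholtz} with vanishing Dirichlet data $u|_{\partial\Omega} = 0$ must be identically zero on $\Omega$. Two candidate solutions of {\Prp} would then differ by such a $u$, forcing them to coincide.

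First, I would apply the discrete Green's second identity \eqref{eq:GRsecond} to the pair $(u, \bar u)$ on the finite region $\Omega \cap H_N$, for $N$ large enough that $\partial \Omega \subset \mathring{H}_N$. Since $k \in (0, 2\sqrt{2})$ is real, both $u$ and $\bar u$ satisfy $(\Delta_d + k^2) \cdot = 0$ in the interior, so the left-hand side vanishes. The right-hand side decomposes into a contribution from $\partial \Omega$ (which vanishes because $u$ does there) and one from $\partial H_N$, leaving
\[
\sum_{y \in \partial H_N}\bigl(u(y)\mathcal{T}\bar u(y) - \bar u(y)\mathcal{T}u(y)\bigr) = 0.
\]

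Next, I would substitute the asymptotic expansion of \hyperref[lemma:zeta]{Lemma \ref*{lemma:zeta}}, namely $\mathcal{T}u(y) = \zeta(y,k)u(y) + \mathcal{O}(|y|^{-3/2})$ together with its complex conjugate, into this identity. Using the decay $u(y) = \mathcal{O}(|y|^{-1/2})$ from the radiation condition \eqref{eq:radcond}, the integrand along $\partial H_N$ becomes
\[
u(y)\mathcal{T}\bar u(y) - \bar u(y)\mathcal{T}u(y) = -2\imath\,\Imag{\zeta(y,k)}\,|u(y)|^{2} + \mathcal{O}\bigl(|y|^{-2}\bigr).
\]
Since $|\partial H_N| = \mathcal{O}(N)$, the error sums to $\mathcal{O}(N^{-1})$, and the displayed identity above forces
\[
\sum_{y \in \partial H_N}\Imag{\zeta(y,k)}\,|u(y)|^{2} = \mathcal{O}\bigl(N^{-1}\bigr).
\]
Because $\Imag{\zeta(y,k)} > 0$ is continuous in the direction $\alpha(y) \in [0,2\pi)$ (a compact parameter space) it admits a uniform positive lower bound, so $\sum_{y \in \partial H_N}|u(y)|^{2} \to 0$ as $N \to \infty$.

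Finally, I would invoke the Rellich--Vekua type theorem of Isozaki--Morioka \cite{IsozakiMorioka2014} for the discrete Helmholtz operator to conclude from this decay that $u$ vanishes outside some bounded region of $\bZ^2$, and then use a discrete unique continuation argument on the seven-point stencil to propagate the zero inward through $\mathring{\Omega}$ to every node. The main obstacle is precisely this last step: discrete unique continuation is substantially more delicate than its continuum counterpart, and it is exactly the cone condition imposed on $\mathring{\Omega}$ (as defined in \cite{KaPes2023}) that makes the propagation possible, since it guarantees that at every interior node the stencil value at the ``missing'' neighbour can be solved from \eqref{eq:Helmholtz} in terms of already-known zero values, allowing the vanishing set to be marched all the way up to $\partial\Omega$.
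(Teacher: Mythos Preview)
Your argument is correct and follows essentially the same route as the paper's own proof: reduce to the homogeneous problem, apply a Green identity on $\Omega\cap H_N$, feed in the asymptotics of \hyperref[lemma:zeta]{Lemma \ref*{lemma:zeta}}, take imaginary parts to force $\sum_{y\in\partial H_N}|u(y)|^{2}\to 0$, invoke the Rellich--Vekua type result of Isozaki--Morioka, and finish with the unique continuation property under the cone condition. The only cosmetic difference is that the paper uses the discrete Green's \emph{first} identity \eqref{eq:GRfirst} with $v=\bar u$ (whose left-hand side is manifestly real, so the imaginary part of the boundary sum vanishes), whereas you use the \emph{second} identity \eqref{eq:GRsecond}; these are equivalent here since $u\,\mathcal{T}\bar u-\bar u\,\mathcal{T}u=2\imath\,\mathrm{Im}(u\,\mathcal{T}\bar u)$. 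If anything, your write-up is slightly more explicit than the paper's in two places: you spell out why the $\partial\Omega$ contribution drops, and you justify the passage from $\sum\mathrm{Im}\,\zeta\,|u|^{2}\to 0$ to $\sum|u|^{2}\to 0$ via a uniform positive lower bound on $\mathrm{Im}\,\zeta$.
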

\begin{proof}
    Demonstrating that the corresponding homogeneous problem has only the trivial solution is a sufficient condition to establish the result.

    Let $\Omega \cap {H}_{N}$ denote the intersection of the domain $\Omega$ with the finite rectangular grid ${H}_{N}$. In references \cite{Kapanadze2021,KaPes2023}, the discrete version of Green's first identity, applied in $\Omega \cap {H}_{N}$, is expressed as follows:
    \begin{equation}\label{eq:thm_GRfirst}
        \sum_{x \in \Omega \cap {H}_{N}}\left( {\nabla}_{d}^{+} u\left( x \right) \cdot {\nabla}_{d}^{+} v\left( x \right) + {\nabla}_{d}^{-} u\left( x \right) \cdot {\nabla}_{d}^{+} v\left( x \right) + u\left( x \right){\Delta}_{d} v\left( x \right) \right) = \sum_{y \in \partial {H}_{N}} u\left( y \right)\mathcal{T}v\left( y \right)\,.
    \end{equation}
    Setting $v \coloneqq \overline{u}$ in \eqref{eq:thm_GRfirst}, we obtain the following result:
    \begin{equation}\label{eq:SN}
        \sum_{x \in \Omega \cap {H}_{N}}\left({\left| {\nabla}_{d}^{+} u\left( x \right) \right|}^{2} + {\left| {\nabla}_{d}^{-} u\left( x \right) \right|}^{2} - {k}^{2}{\left| u(x) \right|}^{2} \right) = \sum_{y \in \partial {H}_{N}}u\left( y \right)\mathcal{T}\overline{u}\left( y \right)\,.
    \end{equation}
    By applying \hyperref[lemma:zeta]{Lemma \ref*{lemma:zeta}}, we can reformulate equality \eqref{eq:SN} in the following manner
    \begin{equation*}
        \sum_{x \in \Omega \cap {H}_{N}}\left({\left| {\nabla}_{d}^{+} u\left( x \right) \right|}^{2} + {\left| {\nabla}_{d}^{-} u\left( x \right) \right|}^{2} - {k}^{2}{\left| u(x) \right|}^{2} \right) = \sum_{y \in \partial {H}_{N}} \overline{\zeta}\left( y,k \right){\left| u\left( y \right) \right|}^{2} + \mathcal{O}\left( {N}^{-1} \right)\,.
    \end{equation*}
    By taking the imaginary part of the final identity and then letting $N$ tend towards infinity, we obtain the limit
    \begin{equation*}
        \sum_{y \in \partial {H}_{N}} {\left| u\left( y \right) \right|}^{2} \to 0\,,\quad N \to \infty\,.
    \end{equation*}
    Furthermore, employing a Rellich-type theorem \cite{IsozakiMorioka2014,Kapanadze2018,AndoIsozakiMorioka2016}, we can assert that for sufficiently large $N$, the function $u$ vanishes outside of ${H}_{N}$. Since $\Omega$ satisfies the cone condition, we can apply the unique continuation property \cite[Theorem 5.7]{AndoIsozakiMorioka2016}, which implies that $u$ is identically zero throughout $\Omega$.
\end{proof}

\section{Difference potentials and the existence of a solution}\label{sec:diff_potent}
For any given function $\varphi: \partial R \to \bC$, the difference single-layer and double-layer potentials are defined as:
\begin{equation}\label{eq:V}
    V\varphi\left( x \right) = \sum_{y \in \partial R}\cG\left( x - y \right)\varphi\left( y \right)\,,\quad \text{for all}\ x \in \bZ^2\,,
\end{equation}
and
\begin{equation}\label{eq:W}
    W\varphi\left( x \right) = \sum_{y \in \partial R}\bigr( \mathcal{T}\cG\left( x - y \right) + \delta_{x,y} \bigl)\varphi\left( y \right)\,,\quad  \text{for all}\ x \in \bZ^2\,,
\end{equation}
respectively. Since $\delta_{x,y} = 0$ for every $x \in \Rint$ and $y \in \partial R$, expression \eqref{eq:repre} can be written as:
\begin{equation*}
    u\left( x \right) = Wu\left( x \right) - V\left( \mathcal{T}u \right)\left( x \right)\,,\quad x \in \mathring{\Omega}\,.
\end{equation*}
The role of the summand $\delta_{x,y}$ is clarified by the following result.
\begin{lemma}\label{lem:VW}
    For every $x \in \Rint$, we have:
    \begin{equation*}
        \left( {\Delta}_{d} + k^2 \right)V\varphi\left( x \right) = 0\,,\quad \text{and}\quad \left( {\Delta}_{d} + k^2 \right)W\varphi\left( x \right) = 0\,.
    \end{equation*}
    This indicates that both the difference single-layer potential $V\varphi\left( x \right)$ and the double-layer potential $W\varphi\left( x \right)$ satisfy the discrete Helmholtz equation with wavenumber $k^2$.
\end{lemma}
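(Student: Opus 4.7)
The plan is to exploit the linearity of $\Delta_d + k^2$ together with the defining identity $(\Delta_d + k^2)_x \mathcal{G}(x-y) = \delta_{x,y}$ from \eqref{eq:greens}. Both sums in \eqref{eq:V} and \eqref{eq:W} are finite, so the operator $\Delta_d + k^2$, which acts only on the $x$-variable, may be moved inside the summation. In this way the lemma reduces to a pointwise statement about the summands.

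For the single-layer potential I would write
\begin{equation*}
    (\Delta_d + k^2) V\varphi(x) = \sum_{y \in \partial R}(\Delta_d + k^2)_x \mathcal{G}(x-y)\,\varphi(y) = \sum_{y \in \partial R}\delta_{x,y}\,\varphi(y),
\end{equation*}
and then invoke the fact that, by conditions (a)--(c) in the definition of a region, $\Rint \cap \partial R = \emptyset$. Hence $\delta_{x,y} = 0$ for every $x \in \Rint$ and every $y \in \partial R$, and the sum vanishes identically.

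For the double-layer potential I would split the kernel $\mathcal{T}\mathcal{G}(x-y) + \delta_{x,y}$ and treat each piece separately. Since $\mathcal{T}$ acts on $y$ while $\Delta_d + k^2$ acts on $x$, the two operators commute, so for $y \in (\partial R)_j$ one obtains
\begin{equation*}
    (\Delta_d + k^2)_x \mathcal{T}\mathcal{G}(x-y) = \mathcal{T}\,\delta_{x,y} = \delta_{x,y} - \delta_{x,\,y - e_j}.
\end{equation*}
At $x \in \Rint$ the first term vanishes, but the second does not in general: it contributes precisely when $y$ is the inward neighbour of $x$ with $y - e_j = x$. The role of the additional $\delta_{x,y}$ inside the bracket of \eqref{eq:W} is to absorb exactly these residual contributions. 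Indeed, expanding the $7$-point stencil gives $(\Delta_d + k^2)_x \delta_{x,y} = \sum_{z \in F^0_x}\delta_{z,y}$ for $x \in \Rint$ and $y \in \partial R$, and when this is summed against $\varphi(y)$ over $\partial R$ it cancels the $-\delta_{x,\,y - e_j}$ terms term by term, yielding $(\Delta_d + k^2) W\varphi(x) = 0$.

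The main delicacy is the side-assignment bookkeeping in the second part: one must check that the convention under which $\mathcal{T}$ is defined on each boundary point (the choice of side $j = j(y)$ in \eqref{eq:T_operator}) is the one used consistently throughout the paper, and that it orients every inward boundary neighbour of $x$ so that the cancellation outlined above is term-for-term. Once this is verified, both identities collapse to a one-line computation from \eqref{eq:greens}.
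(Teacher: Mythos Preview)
Your proposal is correct and follows essentially the same route as the paper's proof. Both arguments move $\Delta_d+k^2$ through the finite sum, dispose of $V$ immediately via \eqref{eq:greens}, and for $W$ show that the contribution $(\Delta_d+k^2)_x\mathcal{T}\mathcal{G}(x-y)=\delta_{x,y}-\delta_{x,y-e_j}$ is exactly compensated by $(\Delta_d+k^2)_x\delta_{x,y}$; the paper simply makes this explicit by splitting into the cases $F_x\cap\partial R=\varnothing$ and $F_x\cap\partial R\neq\varnothing$ and computing the two pieces as $-1$ and $+1$ when $y-e_j=x$. The side-assignment issue you flag is handled in the paper in the same implicit way you anticipate: for $y\in F_x\cap\partial R$ one takes the side $j$ with $y-e_j=x$, which is legitimate since $x\in\mathring{R}$ forces $y\in(\partial R)_j$ for that $j$.
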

\begin{proof}
    For the difference single-layer potential \eqref{eq:V}, we can obtain the following result:
    \begin{equation*}
        \left( {\Delta}_{d} + k^2 \right)V\varphi\left( x \right) = \sum_{y \in \partial R}\left[ \left( {\Delta}_{d} + k^2 \right)\cG\left( x - y \right) \right]\varphi\left( y \right) = \sum_{y \in \partial R}\delta_{x,y}\varphi\left( y \right) = 0\,,\quad \forall x \notin \partial R\,.
    \end{equation*}
    Likewise, if $x \in \Rint$ and ${F}_{x}\cap\partial R = \varnothing$, we can establish the outcome for the difference double-layer potential \eqref{eq:W}. Specifically, we have $\left( {\Delta}_{d} + k^2 \right)\mathcal{T}\cG\left( x - y \right) = 0$ and $\left( {\Delta}_{d} + k^2 \right)\delta_{x,y} = 0$. Thus, it suffices to focus on the scenario where $x \in \Rint$ and ${F}_{x}\cap\partial R \neq \varnothing$.

    Let $y \in {F}_{x}\cap{\left( \partial R \right)}_{j}$ for $j = 1,\dots,6$. This implies that $y - {e}_{j} = x$. Applying the properties ${e}_{4} = -{e}_{1}$, ${e}_{5} = -{e}_{2}$ and ${e}_{6} = -{e}_{3}$, it follows from the definitions of the discrete Laplace operator and Kronecker delta that:
    \begin{align*}
        \left( {\Delta}_{d} + k^2 \right)\delta_{x,y} &= \sum_{i = 1}^{3}{\left( \delta_{x + {e}_{i},y} + \delta_{x - {e}_{i},y} \right)} - \left( 6 - {k}^{2} \right)\delta_{x,y} \\
        &= \sum_{i = 1}^{3}{\left( \delta_{x + {e}_{i},x + {e}_{j}} + \delta_{x - {e}_{i},x + {e}_{j}} \right)} - \left( 6 - {k}^{2} \right)\delta_{x,x + {e}_{j}} = \delta_{x + {e}_{j},x + {e}_{j}} = 1\,.
    \end{align*}
    By considering the definitions of the difference double-layer potential $W$ and the operator $\mathcal{T}$ given by \eqref{eq:W} and \eqref{eq:T_operator}, respectively, and employing the previous equality, we can establish the following expression for the second equality in \hyperref[lem:VW]{Lemma \ref*{lem:VW}}
    \begin{equation*}
        \begin{aligned}
            \left( {\Delta}_{d} + k^2 \right)\left( \mathcal{T}\cG\left( x - y \right) + \delta_{x,y} \right) &= \left( {\Delta}_{d} + k^2 \right)\left( \cG\left( x;y \right) - \cG\left( x;y - {e}_{j} \right) \right) + \left( {\Delta}_{d} + k^2 \right)\delta_{x,y} \\
            &= \delta_{x,y} - \delta_{x,y - {e}_{j}} + \delta_{y + {e}_{j},y} \\
            &= 0 - 1 + 1 = 0\,.
        \end{aligned}
    \end{equation*}
    Therefore, we can conclude that $\left( {\Delta}_{d} + k^2 \right)W\varphi\left( x \right) = 0$, $x \in \Rint$.
\end{proof}
As a consequence of \hyperref[lem:VW]{Lemma \ref*{lem:VW}}, it can be inferred that
\begin{equation*}
    V\varphi\left( x \right) = \sum_{y \in \partial \Omega}\cG\left( x - y \right)\varphi\left( y \right)\,,\quad \text{for}\ x \in \mathring{\Omega}\,,
\end{equation*}
and
\begin{equation*}
    W\varphi\left( x \right) = \sum_{y \in \partial \Omega}\left( \mathcal{T}\cG\left( x - y \right) + \delta_{x,y} \right)\varphi\left( y \right)\,,\quad \text{for}\ x \in \mathring{\Omega}\,,
\end{equation*}
are radiating solutions to equation \eqref{eq:Helmholtz} for any function $\varphi: \partial \Omega \to \bC$.

Notice that, by our assumption on $\partial \Omega$ we have $\Omega = \mathbb{Z}^2$. Consider a point $y_i$ that intersects several sides of $\partial \Omega$. To reduce the number of numerical computations, we shall choose and fix only one side of the boundary. Let $m$ denote the number of points on $\partial\Omega$. We can represent $\partial \Omega$ as a sequence of points $y_1,y_2,\ldots,y_m$ such that $y_i = y_j$ if and only if $i = j$ for all $1 \leq i,j \leq m$. This allows us to express the difference potential $\widetilde{V}$ in \eqref{eq:V} as a single summand associated with the boundary point $y_i$.

Furthermore, given a function $f$ defined on $\partial\Omega$, we can form a vector $F = {\left( f_1,\dots,f_m \right)}^{\top}$, where $f_i \coloneqq f\left( y_i \right)$ for all $1 \leq i \leq m$. Similarly, for an unknown function $\varphi$ defined on $\partial\Omega$, we can write $\Phi = {\left( \varphi_1,\dots,\varphi_m \right)}^{\top}$, where $\varphi_i \coloneqq \varphi\left( y_i \right)$ for all $1 \leq i \leq m$.

We seek a solution to the {\Prp} in the following form:
\begin{equation}\label{eq:sol1}
    u\left( x \right) = \widetilde{V}\varphi\left( x \right) = \sum_{i = 1}^{m}\cG\left( x - {y}_{i} \right){\varphi}_{i}\,,\quad x \in \mathring{\Omega}\,.
\end{equation}
As demonstrated in the proof of \hyperref[lem:VW]{Lemma \ref*{lem:VW}}, it is possible to establish that $u$ given by \eqref{eq:sol1} is a radiating solution to equation \eqref{eq:Helmholtz}. The solution must also satisfy the boundary conditions specified in \eqref{eq:H3}. Consequently, \eqref{eq:H3} gives rise to the following linear system of boundary equations:
\begin{equation}\label{eq:BS}
    \cH\Phi = F\,,
\end{equation}
here
\begin{equation*}
    \cH =
    \begin{pmatrix}
        \cG\left( {y}_{1} - {y}_{1} \right) & \cG\left( {y}_{1} - {y}_{2} \right) & \cG\left( {y}_{1} - {y}_{3} \right) & \cdots & \cG\left( {y}_{1} - {y}_{m} \right) \\
        \cG\left( {y}_{2} - {y}_{1} \right) & \cG\left( {y}_{2} - {y}_{2} \right) & \cG\left( {y}_{2} - {y}_{3} \right) & \cdots & \cG\left( {y}_{2} - {y}_{m} \right) \\
        \cG\left( {y}_{3} - {y}_{1} \right) & \cG\left( {y}_{3} - {y}_{2} \right) & \cG\left( {y}_{3} - {y}_{3} \right) & \cdots & \cG\left( {y}_{3} - {y}_{m} \right) \\
        \vdots & \vdots & \vdots & \ddots & \vdots \\
        \cG\left( {y}_{m} - {y}_{1} \right) & \cG\left( {y}_{m} - {y}_{2} \right) & \cG\left( {y}_{m} - {y}_{3} \right) & \cdots & \cG\left( {y}_{m} - {y}_{m} \right)
    \end{pmatrix}\,.
\end{equation*}
\begin{lemma}
    The linear system of boundary equations \eqref{eq:BS} admits a unique solution.
\end{lemma}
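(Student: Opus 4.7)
The plan is to reduce the unique solvability of the finite linear system $\cH\Phi=F$ to a uniqueness statement for Problem $\mathcal{P}_{\mathrm{ext}}$. Since $\cH$ is a square matrix, it suffices to prove injectivity, i.e., that the homogeneous system $\cH\Phi = 0$ forces $\Phi = 0$. Throughout, I use that the points $y_1,\dots,y_m$ enumerating $\partial\Omega$ are pairwise distinct and that $\Omega = \mathbb{Z}^2$ as a set, so that $\mathring{\Omega} = \mathbb{Z}^2\setminus\partial\Omega$.

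Assume $\cH\Phi=0$ and define the single-layer extension
\begin{equation*}
u(x) \coloneqq \widetilde{V}\varphi(x) = \sum_{i=1}^{m}\cG(x-y_i)\varphi_i,\quad x\in\mathbb{Z}^2.
\end{equation*}
By the discussion following Lemma \ref{lem:VW}, $u$ is a radiating solution of the discrete Helmholtz equation \eqref{eq:Helmholtz} on $\mathring{\Omega}$. The vanishing of $\cH\Phi$ says precisely that $u(y_i)=0$ for all $i$, so $u$ satisfies the homogeneous boundary condition on $\partial\Omega$. Hence $u$ is a radiating solution of Problem $\mathcal{P}_{\mathrm{ext}}$ with zero Dirichlet data, and Theorem \ref{th:uni} forces $u\equiv 0$ on $\mathring{\Omega}$. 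Combined with the boundary vanishing, we conclude $u\equiv 0$ on all of $\mathbb{Z}^2$.

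To extract $\Phi = 0$ from this, I apply $(\Delta_d+k^2)$ to the defining expression of $u$ and evaluate at a boundary point. Using \eqref{eq:greens}, pointwise on $\mathbb{Z}^2$,
\begin{equation*}
(\Delta_d+k^2)u(x) = \sum_{i=1}^{m}(\Delta_d+k^2)\cG(x-y_i)\,\varphi_i = \sum_{i=1}^{m}\delta_{x,y_i}\,\varphi_i.
\end{equation*}
Evaluating at $x=y_j$ and exploiting the distinctness of the $y_i$ yields $(\Delta_d+k^2)u(y_j)=\varphi_j$. Since $u\equiv 0$, the left-hand side vanishes, so $\varphi_j=0$ for each $j$, i.e., $\Phi = 0$. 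This proves injectivity of $\cH$; being a square matrix, it is therefore invertible, and the inhomogeneous system \eqref{eq:BS} has a unique solution for every $F$.

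The only delicate step is the transition from \emph{continuation to zero on $\mathring{\Omega}$} (provided by Theorem \ref{th:uni}) to \emph{vanishing of the densities $\varphi_i$}. The key observation that unlocks it is that the single-layer expression $\widetilde{V}\varphi$ furnishes a canonical extension of $u$ through the boundary points, and that $(\Delta_d+k^2)\widetilde{V}\varphi$ at $y_j$ reads off exactly $\varphi_j$ because the Kronecker deltas $\delta_{y_j,y_i}$ decouple. Everything else is routine given the uniqueness theorem and the identity \eqref{eq:greens} for the lattice Green's function.
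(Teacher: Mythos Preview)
Your proof is correct and follows essentially the same approach as the paper: reduce to injectivity of $\cH$, define the single-layer potential $u=\widetilde{V}\varphi$, invoke Theorem~\ref{th:uni} to get $u\equiv 0$, and then apply $(\Delta_d+k^2)$ at each boundary point using \eqref{eq:greens} to read off $\varphi_j=0$. If anything, you are slightly more explicit than the paper in noting that $u\equiv 0$ must hold on all of $\mathbb{Z}^2$ (not just $\mathring{\Omega}$) before evaluating $(\Delta_d+k^2)u$ at a boundary point, which is a welcome clarification.
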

\begin{proof}
    By virtue of the Rouch\'{e}-Capelli theorem, the unique solvability of the linear system of boundary equations \eqref{eq:BS} is equivalent to the non-existence of non-trivial solutions to the associated homogeneous system, given by
    \begin{equation}\label{eq:hom}
        \cH\Phi = 0\,.
    \end{equation}
    Suppose $\Phi^{*} = {\left( {\varphi}_{1}^{*},\dots,{\varphi}_{m}^{*} \right)}^{\top}$ is a solution to \eqref{eq:hom}. Substituting the solution ${\varphi}^{*}$ into \eqref{eq:sol1}, we obtain
    \begin{equation*}
        u\left( x \right) = \widetilde{V}{\varphi}^{*}\left( x \right) = \sum_{i = 1}^{m}\cG\left( x - {y}_{i} \right){\varphi}_{i}^{*}\,,\quad x \in \mathring{\Omega}\,.
    \end{equation*}
    Since ${\varphi}^{*}$ is a solution to the homogeneous problem \eqref{eq:hom} and satisfies the radiation condition \eqref{eq:radcond}, it follows from \hyperref[th:uni]{Theorem \ref*{th:uni}} that $u \equiv 0$ in $\Omega$. Moreover, at each boundary point ${y}_{i} \in \partial \Omega$, we have
    \begin{equation*}
        0 = {\left( {\Delta}_{d} + {k}^{2} \right)u(x) \Big|}_{x = {y}_{i}} = \left( {\Delta}_{d} + {k}^{2} \right)\widetilde{V}{\varphi}^{*}(x) \Big|_{x = {y}_{i}} = \sum_{i = 1}^{m}\delta_{x,y_i}{\varphi}^{*}_i \Big|_{x = {y}_{i}}  = {\varphi}_{i}^{*}\,,\quad \text{for all}\ 1\leq i\leq m\,.
    \end{equation*}
    Therefore, all solutions to the homogeneous system \eqref{eq:hom} are trivial, and the linear system of boundary equations \eqref{eq:BS} has a unique solution.
\end{proof}
\begin{theorem}
    There exists a unique radiating solution to {\Prp}, which can be expressed as \eqref{eq:sol1}, where $\Phi$ is the unique solution to the system of linear equations \eqref{eq:BS}.
\end{theorem}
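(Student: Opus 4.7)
The plan is to prove the theorem by stitching together the preceding results: the previous lemma guarantees a unique $\Phi$ solving $\mathcal{H}\Phi = F$, and Theorem \ref{th:uni} provides the uniqueness statement for radiating solutions. What remains is to verify that the candidate function $u$ defined by \eqref{eq:sol1} with this particular $\Phi$ is indeed a radiating solution to $\mathcal{P}_{\mathrm{ext}}$, i.e., that it simultaneously satisfies the discrete Helmholtz equation in $\mathring{\Omega}$, matches the Dirichlet data on $\partial\Omega$, and obeys the radiation condition \eqref{eq:radcond}.

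First I would let $\Phi=(\varphi_1,\dots,\varphi_m)^{\top}$ be the unique solution of \eqref{eq:BS} supplied by the preceding lemma, and set $u(x)=\widetilde{V}\varphi(x)=\sum_{i=1}^{m}\mathcal{G}(x-y_i)\varphi_i$. Then I would verify the three properties in turn. For the interior equation, since $x\in\mathring{\Omega}$ and $y_i\in\partial\Omega$ imply $x\neq y_i$, the defining equation \eqref{eq:greens} yields $(\Delta_d+k^2)\mathcal{G}(x-y_i)=\delta_{x,y_i}=0$, so linearity gives $(\Delta_d+k^2)u(x)=0$ in $\mathring{\Omega}$; this is essentially the single-layer part of Lemma \ref{lem:VW}. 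For the boundary condition, evaluating at $x=y_j$ produces $u(y_j)=\sum_{i=1}^{m}\mathcal{G}(y_j-y_i)\varphi_i$, which is exactly the $j$-th row of $\mathcal{H}\Phi=F$, hence equals $f_j=f(y_j)$. Uniqueness is then immediate: any other radiating solution $\tilde u$ would have $u-\tilde u$ solving the homogeneous version of $\mathcal{P}_{\mathrm{ext}}$ while still satisfying \eqref{eq:radcond}, and Theorem \ref{th:uni} forces $u\equiv\tilde u$.

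The main obstacle, and the step I would spend the most care on, is confirming that the finite sum $u=\sum_{i}\mathcal{G}(\,\cdot\,-y_i)\varphi_i$ inherits the radiation condition \eqref{eq:radcond} from its individual summands. Each translate $\mathcal{G}(x-y_i)$ is a radiating solution, since $\mathcal{G}$ itself is obtained via the limiting absorption principle \eqref{eq:pointwise} and has the far-field expansion \eqref{eq:far}; for $y_i$ fixed and $|x|\to\infty$ one has $\widehat{x-y_i}\to\hat x$ together with $|x-y_i|=|x|+\mathcal{O}(1)$, so the leading $|x|^{-1/2}$ behaviour and the phase factor $e^{\imath\xi_j^{*}(\alpha,k)}$ from condition \eqref{eq:radcond} transfer to $\mathcal{G}(x-y_i)$ up to an $\mathcal{O}(|x|^{-3/2})$ correction, uniformly in the direction $\hat x$. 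Because $\partial\Omega$ is finite, summing the $m$ translates against the coefficients $\varphi_i$ preserves both the $\mathcal{O}(|x|^{-1/2})$ bound and the shift relation $u(x+e_j)=e^{\imath\xi_j^{*}(\alpha,k)}u(x)+\mathcal{O}(|x|^{-3/2})$; in particular $u$ is a radiating solution in the sense of Definition \ref{def:radiating}. With this verification in place the three bullets combine to give existence, and Theorem \ref{th:uni} closes uniqueness, completing the proof.
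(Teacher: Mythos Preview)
Your proposal is correct and follows essentially the same route as the paper: the paper states this theorem without a separate proof, treating it as an immediate corollary of Lemma~\ref{lem:VW} (that $\widetilde V\varphi$ solves \eqref{eq:Helmholtz} and is radiating), the preceding lemma on unique solvability of \eqref{eq:BS}, and Theorem~\ref{th:uni}. Your write-up is in fact more explicit than the paper's own treatment of the radiation condition, which is simply asserted after Lemma~\ref{lem:VW}; otherwise the argument is identical.
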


\section{Results of numerical computations}\label{sec:numresalts}
\subsection{Description of the procedure for computing the lattice Green's function}\label{subsec:lattice_Green's_func}
The main challenge in numerically evaluating solutions to \eqref{eq:BS} lies in computing the lattice Green's function \eqref{eq:gmn}. To address this, we employ the method devised by Berciu and Cook \cite{Berciu2010}, although various approaches have been developed ({\eg}, refer to \cite{Horiguchi1972,Morita1971}). Applying 8-fold symmetry, it is sufficient to calculate the lattice Green's function $\cG\left( i,j \right)$ for $i \geq j \geq 0$. Following the approach in \cite{Berciu2010}, we introduce the vectors $\cV_{2p} = \tps{\left( \cG\left( 2p,0 \right),\cG\left( 2p - 1,1 \right),\ldots,\cG\left( p,p \right) \right)}$ and $\cV_{2p + 1} = \tps{\left( \cG\left( 2p + 1,0 \right),\cG\left( 2p,1 \right),\ldots,\cG\left( p + 1,p \right) \right)}$. These vectors collect all distinct Green's functions $\cG\left( i,j \right)$ with ``Manhattan distances'' $\left| i \right| + \left| j \right|$ of $2p$ and $2p + 1$, respectively. For any Manhattan distance greater than $1$, the equation
\begin{equation}\label{eq:green_delta_stand_form}
    \left( \Delta_{d} + k^2 \right)\cG\left( x \right) = \delta_{x,0}\,,
\end{equation}
can be expressed in matrix form as
\begin{equation}\label{eq:matrix_reccurence_form}
    {\gamma}_{n}\left( k \right){\cV}_{n} = {\alpha}_{n}\left( k \right){\cV}_{n - 1} + {\beta}_{n}\left( k \right){\cV}_{n + 1}\,,
\end{equation}
where the matrices ${\alpha}_{n}\left( k \right)$, ${\beta}_{n}\left( k \right)$, and ${\gamma}_{n}\left( k \right)$ feature the property of sparsity. Notably, only the dimensions of these matrices depend on $n$ (For details on the dimensions of these matrices, refer to \hyperref[tab:size_and_nnz_sparse_matrices]{Table \ref*{tab:size_and_nnz_sparse_matrices}}, and for the assembly of these sparse matrices, see \hyperref[append:sparse_matrix]{Appendix \ref*{append:sparse_matrix}}). As demonstrated in \cite{Berciu2010}, for any $n \ge 1$, we obtain
\begin{equation}\label{eq:sol._reccur._matrix}
    {\cV}_{n} = {A}_{n}\left( k \right){\cV}_{n - 1}\,,
\end{equation}
where the matrices ${A}_{n}\left( k \right)$ are defined by inserting \eqref{eq:sol._reccur._matrix} into the equation \eqref{eq:matrix_reccurence_form}
\begin{equation*}
    {A}_{n}\left( k \right) = {\left[ {\gamma}_{n}\left( k \right) - {\beta}_{n}\left( k \right){A}_{n+1} \right]}^{-1}{\alpha}_{n}\left( k \right)\,.
\end{equation*}
The matrices can be computed starting from a sufficiently large $N$ with ${A}_{N+1}\left( k \right) = 0$. It is important to note that an exceptional case arises when $k = 2$, leading to a failure of this procedure. In such instances, a more refined ``initial guess'' is required than ${A}_{N+1}\left( k \right) = 0$, as $\mathrm{det}\left( {\gamma}_n\left( k \right) \right) = 0$, and the matrix ${\gamma}_{n}\left( k \right) - {\beta}_{n}\left( k \right){A}_{n + 1}$ is non-invertible. To address this scenario, we set $\tilde{k}^{2} = {k}^{2} + \imath\varepsilon$, where $\varepsilon$ is sufficiently small. Assuming, for significantly large Manhattan distances $N = \left| i \right| + \left| j \right|$ where $N + 1 = 2p$, $p \in \bZ$, there exists an asymptotic dependence: $\cG\left( i + 1,j \right) \approx \cG\left( i,j + 1 \right) = \lambda\left( \tilde{k} \right)\cG\left( i,j \right)$, with $\left| \lambda\left( \tilde{k} \right) \right| < 1$. This implies that all propagators exhibit a uniform decrease as $N$ grows. Upon considering the asymptotic dependence in equation \eqref{eq:green_delta_stand_form}, the ensuing quadratic equation is derived: $2{\lambda}^{2} + \left( \tilde{k}^{2} - 4 \right)\lambda + 2 = 0$, yielding the immediate conclusion of the existence of a physical solution $\left| \lambda\left( \tilde{k} \right) \right| < 1$ ({\cf} \cite{Berciu2010}). The resulting matrix ${A}_{N+1}\left( k \right)$ has a dimension $\left( p + 1 \right) \times p$, and its entries are defined as follows: ${A}_{N+1}\left( \tilde{k} \right)\mid_{i,i} = {A}_{N+1}\left( \tilde{k} \right)\mid_{i,i - 1} = \lambda / 2$, $i = \overline{2,p}$. Additionally, ${A}_{N+1}\left( \tilde{k} \right)\mid_{1,1} = {A}_{N+1}\left( \tilde{k} \right)\mid_{p + 1,p} = {\lambda}$, and all other matrix elements remain zero.

Once ${A}_{n}\left( k \right)$ are known, we can express ${\cV}_{n} = {A}_{n}\left( k \right) \cdots {A}_{1}\left( k \right){\cV}_{0}$, where ${\cV}_{0} = \cG\left( 0,0 \right)$. Specifically, ${\cV}_{1} = \cG\left( 1,0 \right) = {A}_{1}\left( k \right)\cG\left( 0,0 \right)$. By inserting this into equation \eqref{eq:green_delta_stand_form} and considering property \eqref{eq:property_Green's_funct.}, we obtain the expression $6\cG\left( 1,0 \right) - \left( 6 - {k}^{2} \right)\cG\left( 0,0 \right) = 1$. From this, we determine that $\cG\left( 0,0 \right) = 1 / \left[ 6 {A}_{1}\left( k \right) - 6 + {k}^{2} \right]$. This concludes the computation employing only elementary operations and devoid of integrals. Additionally, it is important to note another meaningful advantage of this method. The matrices ${A}_{n}\left( k \right)$ are computed by descending from asymptotically large Manhattan distances. As they propagate towards diminished Manhattan distances, it provides us with the physical solution.

Let us proceed with developing another approach to determine the ``initial guess'' for computing the lattice Green's function using the described procedure, particularly when $k$ pertains to the set of real numbers, specifically $k \in \left( 0,2\sqrt{2} \right)$. Assuming asymptotically large Manhattan distances $\left| i \right| + \left| j \right| \gg 1$, the following heuristic relation holds:
\begin{equation*}
    \cG\left( r,s \right) = {\lambda}^{\left( r + s \right) - \left( i + j \right)} q\left( {r + s},{i + j} \right) {h} \cG\left( i,j \right)\,,\quad q\left( {r + s},{i + j} \right) = \frac{i + j}{r + s}\,.
\end{equation*}
The role of the parameters $\lambda$ and $h \neq 0$ are defined in the subsequent discussion. According to \eqref{eq:sol._reccur._matrix}, we express ${\cV}_{n + 1} = {A}_{n + 1}\left( k \right){\cV}_{n}$, where $n + 1 = 2p$. Consequently, we obtain the following representations for the vectors ${\cV}_{n + 1}$ and ${\cV}_{n}$, {\viz}, ${\cV}_{n + 1} = \tps{\left( \cG\left( 2p,0 \right),\cG\left( 2p - 1,1 \right),\ldots,\cG\left( p,p \right) \right)}$ and $\cV_{n} = \tps{\left( \cG\left( 2p - 1,0 \right),\cG\left( 2p - 2,1 \right),\ldots,\cG\left( p,p - 1 \right) \right)}$. Let us now express the relation between ${\cV}_{n + 1}$ and ${\cV}_{n}$ in matrix-vector form
\begin{equation*}
    {
    \begin{pmatrix}
        \cG\left( 2p,0 \right) \\
        \cG\left( 2p - 1,1 \right) \\
        \cG\left( 2p - 2,2 \right) \\
        \vdots \\
        \cG\left( p,p \right)
    \end{pmatrix}
    }_{\left( p + 1 \right) \times 1}
    =
    {\left[ {A}_{n + 1}\left( k \right) \right]}_{\left( p + 1 \right) \times p}
    {
    \begin{pmatrix}
        \cG\left( 2p - 1,0 \right) \\
        \cG\left( 2p - 2,1 \right) \\
        \vdots \\
        \cG\left( p,p - 1 \right)
    \end{pmatrix}
    }_{p \times 1}\,.
\end{equation*}
In our cases, we have $i = 2p - \ell$ and $j = \ell$, where $\ell$ ranges from $0$ to $p$. For simplicity, we reintroduce the notation $q\left( {2p + r + s},{2p} \right)$ as $q\left( p,{r + s} \right)$, meaning $q\left( {2p + r + s},{2p} \right) = q\left( p,{r + s} \right)$. As a result, we can easily derive
\begin{equation}\label{eq:assumption_G}
    \cG\left( 2p - \ell + r,\ell + s \right) = {\lambda}^{r + s} q\left( p,{r + s} \right) {h} \cG\left( 2p - \ell,\ell \right)\,,\quad q\left( p,{r + s} \right) = \frac{2p}{2p + r + s}\,.
\end{equation}
Due to the equation \eqref{eq:green_delta_stand_form}, we obtain
\begin{align*}
    \left( 6 - {k}^{2} \right)\cG\left( 2p - \ell,\ell \right) &= \cG\left( 2p - \ell + 1,\ell \right) + \cG\left( 2p - \ell - 1,\ell \right) \\
    &+ \cG\left( 2p - \ell,\ell + 1 \right) + \cG\left( 2p - \ell,\ell - 1 \right) \\
    &+ \cG\left( 2p - \ell + 1,\ell - 1 \right) + \cG\left( 2p - \ell - 1,\ell + 1 \right)\,,
\end{align*}
and, considering the assumption for $\cG\left( 2p - \ell + r,\ell + s \right)$, we arrive at the following equation:
\begin{equation}\label{eq:quad_poly_lambda}
    a\left( p,h \right){\lambda}^{2} + b\left( k,h \right)\lambda + c\left( p,h \right) = 0\,,
\end{equation}
where
\begin{align*}
    a\left( p,h \right) &= 2 q\left( p,{1} \right) {h} = \frac{4p}{2p + 1} {h}\,, \\
    b\left( k,h \right) &= {k}^{2} - 6 + 2 q\left( p,{0} \right) {h} = {k}^{2} - 6 + 2{h}\,, \\
    c\left( p,h \right) &= 2 q\left( p,{-1} \right) {h} = \frac{4p}{2p - 1} {h}\,.
\end{align*}
The discriminant of the quadratic polynomial \eqref{eq:quad_poly_lambda} is given by
\begin{equation}\label{eq:discr_lambd_poly}
    \mathcal{D}_{1} = {\left( {k}^{2} - 6 + 2{h} \right)}^{2} - \frac{64{p}^{2}}{4{p}^{2} - 1}{h}^{2} = -4\frac{12{p}^{2} + 1}{4{p}^{2} - 1}{h}^{2} + 4\left( {k}^{2} - 6 \right){h} + {\left( {k}^{2} - 6 \right)}^{2}\,.
\end{equation}
Let us now calculate the discriminant of the quadratic polynomial \eqref{eq:discr_lambd_poly}, that is
\begin{equation}\label{eq:discr_h_poly}
    \frac{\mathcal{D}_{2}}{4} = \frac{64{p}^{2}}{4{p}^{2} - 1}{\left( {k}^{2} - 6 \right)}^{2} \geq 0\,,\quad p \gg 1\,.
\end{equation}
The solutions to the quadratic polynomial \eqref{eq:discr_lambd_poly} are expressed in the following manner:
\begin{equation}\label{eq:solution_quad_eqt_h}
    \begin{aligned}
        h =
        \left\{
        \begin{array}{ll}
           \left( 6 - {k}^{2} \right){\left( \pm \frac{8p}{\sqrt{4{p}^{2} - 1}} + 2 \right)}^{-1},  & \hbox{\text{for} $0 < {k}^{2} < 6$,} \\
           \left( {k}^{2} - 6 \right){\left( \pm \frac{8p}{\sqrt{4{p}^{2} - 1}} - 2 \right)}^{-1},  & \hbox{\text{for} $6 < {k}^{2} < 8$.}
        \end{array}
        \right.
    \end{aligned}
\end{equation}

To establish a relationship between $\lambda$ and $h$, it is essential to align with the behaviour of the solution to the given problem. Specifically, wave propagation diminishes over sufficiently large Manhattan distances. In simpler terms, all propagators decrease at a certain rate as $n$ increases. We express this fact in the following way, observing that due to \eqref{eq:assumption_G}, we have the asymptotic dependence
\begin{equation*}
    \cG\left( 2p - \ell + 1,\ell \right) = \cG\left( 2p - \ell,\ell + 1 \right) = {\lambda} \frac{2p}{2p + 1} {h} \cG\left( 2p - \ell,\ell \right)\,,\quad h \neq 0\,.
\end{equation*}
Following this point, the subsequent condition must be fulfilled
\begin{equation}\label{eq:relat_lambda_and_h}
    \left| \lambda \right| \frac{2p}{2p + 1} \left| {h} \right| < 1\,,
\end{equation}
this implies that
\begin{equation}\label{eq:lambda_h_inequality}
    \left| h \right| < \frac{1}{\left| {\lambda} \right|}\frac{2p + 1}{2p} = \frac{1}{\left| {\lambda} \right|}\left( 1 + \frac{1}{2p} \right)\,.
\end{equation}

In the specific case where ${k}^{2} = 6$, as assumed, and with the additional condition that $h \neq 0$, it follows from \eqref{eq:discr_lambd_poly} that $\mathcal{D}_{1} < 0$. Consequently, the quadratic equation \eqref{eq:quad_poly_lambda} possesses two distinct (non-real) complex roots, which are complex conjugates of each other. In this case, we derive
\begin{equation}\label{eq:specific_case_k}
    \left| \lambda \right| = \sqrt{\frac{2p + 1}{2p - 1}}\,.
\end{equation}
Considering the upper bound \eqref{eq:lambda_h_inequality} for $\left| \lambda \right|$, along with the condition $h \neq 0$ and the quality \eqref{eq:specific_case_k}, we easily obtain the result
\begin{equation*}
    0 < \left| {h} \right| < \sqrt{1 - \frac{1}{4{p}^{2}}}\,,\quad p \gg 1\,.
\end{equation*}
Let us now move forward to estimate $h$ in the case where ${k}^{2} \in \left( 0,8 \right) \setminus \left\{ 6 \right\}$. In this context, consider the scenario where the discriminant \eqref{eq:discr_lambd_poly} of the quadratic polynomial \eqref{eq:quad_poly_lambda} is negative, meaning that $h$ lies outside the roots of the polynomial \eqref{eq:discr_lambd_poly}. Consequently, for the polynomial \eqref{eq:quad_poly_lambda}, we have two distinct complex roots ${\lambda}_{1}$ and ${\lambda}_{2}$. It is easy to show, for \eqref{eq:solution_quad_eqt_h}, that the following estimate is valid
\begin{equation}\label{eq:abs_h_lower_bound}
    \left| h \right| \geq \frac{\left| {k}^{2} - 6 \right|}{6}\,.
\end{equation}
The same equality is derived for $\left| \lambda \right|$ as \eqref{eq:specific_case_k}. Considering this result together with \eqref{eq:lambda_h_inequality} and \eqref{eq:abs_h_lower_bound}, we obtain
\begin{equation*}
    \frac{\left| {k}^{2} - 6 \right|}{6} \leq \left| h \right| < \sqrt{1 - \frac{1}{4{p}^{2}}}\,,
\end{equation*}
providing that
\begin{equation*}
    p > \frac{3}{k\sqrt{12 - {k}^{2}}}\,.
\end{equation*}

Before presenting the entries of the matrix ${A}_{n + 1}\left( k \right)$, we first provide a remark regarding $\lambda$, which is the root of the quadratic polynomial \eqref{eq:quad_poly_lambda} and depends on the values of $k$, $p$, and $h$. The entries of the matrix ${A}_{n + 1}\left( k \right)$ are denoted using the following notation: ${a}_{i,j}^{n + 1}\left( k \right)$, where $i = 1,2,\ldots,p+1$ and $j = 1,2,\ldots,p$. From the matrix-vector form, we easily identify that non-zero elements of the matrix ${A}_{n + 1}\left( k \right)$ are defined as follows:
\begin{equation*}
    {a}_{1,1}^{n + 1}\left( k \right) = {a}_{p + 1,p}^{n + 1}\left( k \right) = \frac{2p - 1}{2p} {\rho}\,, \quad {a}_{i,i - 1}^{n + 1}\left( k \right) = {a}_{i,i}^{n + 1}\left( k \right) = \frac{2p - 1}{4p} {\rho}\,,\quad {\rho} = \frac{\lambda}{h}\,,\quad i = 2,3,\ldots,p\,.
\end{equation*}

\subsection{Numerical experiments on computing lattice Green's functions}\label{subsec:comp_Green's_func}
Prior to raising the numerical results of the given problem \eqref{eq:Helmholtz}-\eqref{eq:H3}, it is advisable to provide remarks regarding the numerical computations of the lattice Green's function \eqref{eq:gmn}. The accuracy of the calculation of the exact solution representation \eqref{eq:sol1} mainly depends on the numerical approximation of the lattice Green's function. To demonstrate the experimental convergence of the procedure outlined in \hyperref[subsec:lattice_Green's_func]{Subsec. \ref*{subsec:lattice_Green's_func}} for the numerical computation of the lattice Green's function, consider ${p}_{m} = {2}^{m} {p}_{0}$ with ${p}_{0} = 71$, and ${N}_{m} = {2}^{m + 1} {p}_{0} - 1$, where $m = 1,2,3,4$. For each step $m$, we define the following vectors: ${\cV}_{2i}^{\left( m \right)} = \tps{\left( {\cG}^{\left( m \right)}\left( 2i,0 \right),{\cG}^{\left( m \right)}\left( 2i - 1,1 \right),\ldots,{\cG}^{\left( m \right)}\left( i,i \right) \right)}$ and ${\cV}_{2i + 1}^{\left( m \right)} = \tps{\left( {\cG}^{\left( m \right)}\left( 2i + 1,0 \right),{\cG}^{\left( m \right)}\left( 2i,1 \right),\ldots,{\cG}^{\left( m \right)}\left( i + 1,i \right) \right)}$, where $i = 0,1,\ldots,{p}_{m} - 1$. It is important to note that, for every step $m$ starting from $0$, there are a total of ${N}_{m} + 1$ such vectors, each with the dimension of $\left( i + 1 \right) \times 1$. As $m$ increases, the first ${N}_{0} + 1 = 2 {p}_{0}$ vectors are common. To collect the elements of these common vectors, it is useful to form a matrix array by completing them. Consider the matrix $\boldsymbol{\cV}_{{N}_{0}}^{\left( m \right)}$ of dimension $\left( {N}_{0} + 1 \right) \times {p}_{0}$, with its entries denoted by ${v}_{i,j}^{\left( m \right)}$. The non-zero elements of this matrix are precisely defined as ${v}_{i,j}^{\left( m \right)} = {\cG}^{\left( m \right)}\left( i - 1,j - 1 \right)$, subject to the constraints $i - j \geq 0$ and $i + j \leq {N}_{0} + 2$, which correspond to ${p}_{0}\left( {p}_{0} + 1 \right) = 5112$ entries. All other entries of $\boldsymbol{\cV}_{{N}_{0}}^{\left( m \right)}$ are zero. Let us consider the element-wise norm of the matrix difference, restricted by $j \leq i \leq -j + {N}_{0} + 2$ and $j = 1,2,\ldots,{p}_{0}$, defined as:
\begin{equation*}
    {\left\| \boldsymbol{\cV}_{{N}_{0}}^{\left( m \right)} - \boldsymbol{\cV}_{{N}_{0}}^{\left( m - 1 \right)} \right\|}_{\max} = \max\limits_{i,j} \left| {v}_{i,j}^{\left( m \right)} - {v}_{i,j}^{\left( m - 1 \right)} \right|\,,\quad m = 1,2,3,4\,.
\end{equation*}

By employing analogous reasoning, we introduce the matrix $\boldsymbol{\cV}_{{N}_{0},{h}}^{\left( m \right)}$, wherein the non-zero entries correspond to the lattice Green's function computed using another choice of the ``initial guess'' that depends on the parameter $h$. In our computations, we explored various values of $\left| h \right|$, resulting in modifications to the coefficients associated with the convergence order. Nevertheless, the convergence order itself remains consistent with that observed in all preceding cases. In the considered scenario, we select $\left| h \right| = 1 / \left| \lambda \right| = \sqrt{(2{p}_{m} - 1) / (2{p}_{m} + 1)}$. The results derived from the described computations for the wavenumber $k = 2$ are summarized in \hyperref[tab:Experim_Converg]{Table \ref*{tab:Experim_Converg}}. Additionally, \hyperref[fig:real_G_funcs]{Figure \ref*{fig:real_G_funcs}} provides graphical representations of the real part of the lattice Green's functions for the triangular lattice in both transformed and original coordinates, corresponding to ${N}_{4} = 2271$. For illustrative purposes, the transformed coordinates of the triangular lattice are plotted within the range $\left[ -40,40 \right] \times \left[ -40,40 \right] \subset {\bZ}^{2}$. Meanwhile, the original coordinates are confined to a subset of ${\bR}^{2}$ through a suitable inverse transformation from ${\bZ}^{2}$ coordinates.

It is important to highlight that all calculations were performed using the scientific programming language GNU Octave $8.4.0$ on a standard laptop computer equipped with an AMD Ryzen $5$ $5600$H processor with Radeon Graphics ($12$ CPUs), running at approximately $3.3$GHz, and having $8192$MB RAM. We encountered limitations preventing further exploration ({\ie}, for $m \geq 5$). Specifically, for $m = 5$, the computation involves evaluating ${p}_{5}\left( {p}_{5} + 1 \right) = \pgfmathprintnumber[std,std=-1:0,fixed zerofill,sci zerofill,sci e,precision=6,1000 sep={}]{5.164256000000000e+06}$ lattice Green's function values, exceeding the available memory capacity of this machine.
\begin{table}[H]
    \centering


\begin{tabular}{ccccc}
    \toprule
    $m$ & ${p}_{m}$ & ${N}_{m}$ & ${\left\| \boldsymbol{\cV}_{{N}_{0}}^{\left( m \right)} - \boldsymbol{\cV}_{{N}_{0}}^{\left( m - 1 \right)} \right\|}_{\max}$ & ${\left\| \boldsymbol{\cV}_{{N}_{0},{h}}^{\left( m \right)} - \boldsymbol{\cV}_{{N}_{0},{h}}^{\left( m - 1 \right)} \right\|}_{\max}$ \\
    \midrule
    $0$ & $71$ & $141$ & $-$ & $-$ \\
    $1$ & $142$ & $283$ & $\pgfmathprintnumber[std,std=-1:0,fixed zerofill,sci zerofill,sci e,precision=4,1000 sep={}]{9.154631543183475e-02}$ & $\pgfmathprintnumber[std,std=-1:0,fixed zerofill,sci zerofill,sci e,precision=4,1000 sep={}]{9.082118795036294e-02}$ \\
    $2$ & $284$ & $567$ & $\pgfmathprintnumber[std,std=-1:0,fixed zerofill,sci zerofill,sci e,precision=4,1000 sep={}]{8.760788431425402e-04}$ & $\pgfmathprintnumber[std,std=-1:0,fixed zerofill,sci zerofill,sci e,precision=4,1000 sep={}]{8.263699031209913e-04}$ \\
    $3$ & $568$ & $1135$ & $\pgfmathprintnumber[std,std=-1:0,fixed zerofill,sci zerofill,sci e,precision=4,1000 sep={}]{4.119278606499520e-04}$ & $\pgfmathprintnumber[std,std=-1:0,fixed zerofill,sci zerofill,sci e,precision=4,1000 sep={}]{4.252963846704570e-04}$ \\
    $4$ & $1136$ & $2271$ & $\pgfmathprintnumber[std,std=-1:0,fixed zerofill,sci zerofill,sci e,precision=4,1000 sep={}]{1.807911326154362e-04}$ & $\pgfmathprintnumber[std,std=-1:0,fixed zerofill,sci zerofill,sci e,precision=4,1000 sep={}]{1.772844979442056e-04}$ \\
    \bottomrule
\end{tabular}
    \caption{The absolute error associated with the difference between the values of every two successive lattice Green's functions for the case of $k = 2$.}
    \label{tab:Experim_Converg}
\end{table}
We conducted numerical experiments, presented in \hyperref[tab:Experim_Converg]{Table \ref*{tab:Experim_Converg}}, to investigate the behaviour of computed lattice Green's functions when initiating computations with various fixed Manhattan distances ${N}_{m}$. Our analysis focused on observing the maximum absolute differences between lattice Green’s functions that are common in all considered instances. The numerical results indicate a stability property when computations start from a considerable distance. As referred in \cite{Berciu2010}, initiating computations for asymptotically large Manhattan distances with a refined ``initial guess'' yields an immediate physical solution. However, advancing further in the context of the subsequent increase in Manhattan distance established impossible for us, given the limitations of the machine resources.

In addition to the prescribed approach for computing lattice Green's functions, we have explored various numerical integration techniques, including the composite trapezoidal rule, composite Simpson's $1/3$ rule, and Gaussian quadrature. These methods are employed for the numerical approximation of the lattice Green's functions \eqref{eq:gmn} with ${k}^{2} + \imath\varepsilon$, which involve double integrals. It is observed that the integrand in \eqref{eq:gmn} exhibits fast oscillations as the magnitudes of ${x}_{1}$ and ${x}_{2}$ increase from the origin. To address this behaviour, it becomes necessary to reduce the mesh length for both integration variables significantly. However, this adjustment alone is insufficient to achieve the desired precision, as the fast oscillations function is approximated. Additionally, it becomes computationally expensive.

\begin{figure}[H]
    \centering
    \begin{subfigure}{.49\textwidth}
        \centering
        \includegraphics[width=\textwidth,height=\textheight,keepaspectratio]{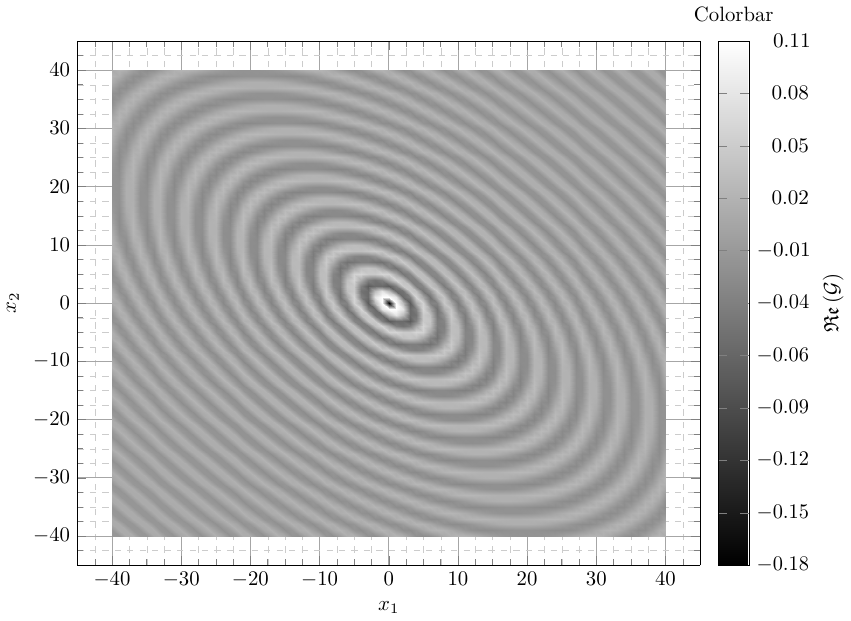}
        \caption{The density plot of $\Real{\cG}$; $\left( {x}_{1},{x}_{2} \right) \in {\bZ}^2$.}
        \label{fig:real_lattice_G_funcs_cartesian}
    \end{subfigure}
    \hfill
    \begin{subfigure}{.49\textwidth}
        \centering
        \includegraphics[width=\textwidth,height=\textheight,keepaspectratio]{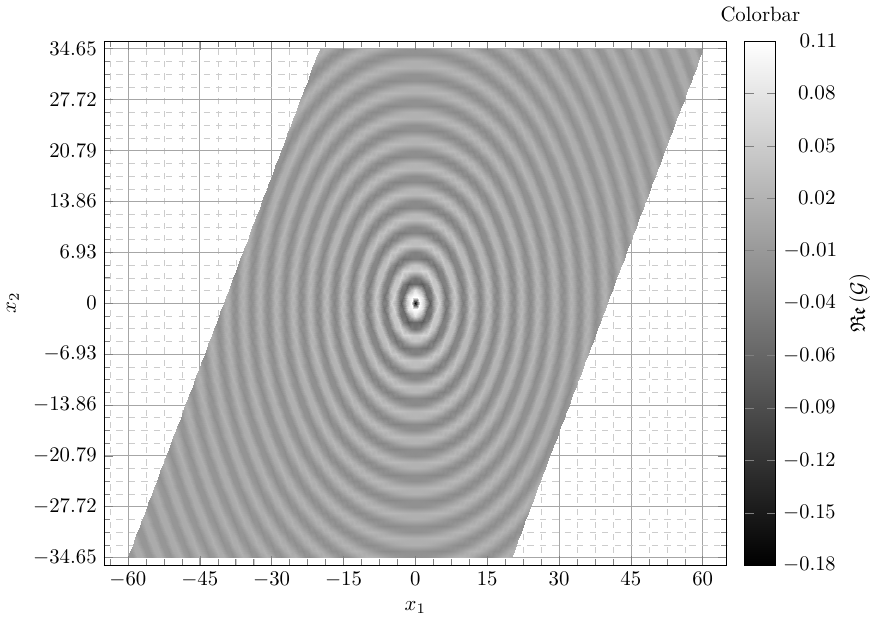}
        \caption{The density plot of $\Real{\cG}$; $\left( {x}_{1},{x}_{2} \right) \in {\bR}^2$.}
        \label{fig:real_lattice_G_funcs_triangular}
    \end{subfigure}
    \caption{The plots of the real part of the lattice Green's functions in sub-figures (a) and (b) are presented in both the transformed and original coordinates of the triangular lattice, respectively, wherein $k = 2$.}
    \label{fig:real_G_funcs}
\end{figure}

\subsection{Numerical experiments for various instances}\label{subsec:num_exper}
Consider {\Prp} with $\partial\Omega = {\Gamma}_{1} \cup {\Gamma}_{2}$. Specifically, we have ${\Gamma}_{1} = \left\{ \left( -5,0 \right), \left( -4,0 \right) \right\}$ and ${\Gamma}_{2} = \left\{ \left( 4,0 \right), \left( 5,0 \right) \right\}$. In this problem, two distinct cases are considered:
\begin{itemize}
    \item In the first case, referred to as the symmetric mode, we assume that the function $f\left( y \right) \equiv 1$ is constant on both line segments. This assumption is made for simplicity.
    \item In the second case, referred to as the skew-symmetric mode, we assume that $f\left( y \right) \equiv -1$ on ${\Gamma}_{1}$, while $f\left( y \right) \equiv 1$ on ${\Gamma}_{2}$.
\end{itemize}
The boundary $\partial \Omega$ consists of four points: ${y}_{1} = \left( -5,0 \right)$, ${y}_{2} = \left( -4,0 \right)$, ${y}_{3} = -{y}_{2} = \left( 4,0 \right)$ and ${y}_{4} = -{y}_{1} = \left( 5,0 \right)$.

The vector $\Phi = \tps{\left( \varphi_1,\dots,\varphi_4 \right)}$ is the unique solution to equation \eqref{eq:BS}, where the coefficient matrix $\cH$ of the system \eqref{eq:BS} is symmetric. Specifically, in this context, $\cH$ is given by:
\begin{equation*}
    \cH =
    \begin{pmatrix}
        \cG\left( 0,0 \right) & \cG\left( 1,0 \right) & \cG\left( 9,0 \right) & \cG\left( 10,0 \right) \\
        \cG\left( 1,0 \right) & \cG\left( 0,0 \right) & \cG\left( 8,0 \right) & \cG\left( 9,0 \right) \\
        \cG\left( 9,0 \right) & \cG\left( 8,0 \right) & \cG\left( 0,0 \right) & \cG\left( 1,0 \right) \\
        \cG\left( 10,0 \right) & \cG\left( 9,0 \right) & \cG\left( 1,0 \right) & \cG\left( 0,0 \right)
    \end{pmatrix}\,.
\end{equation*}
In order to solve the obtained system of linear equations \eqref{eq:BS} and find the solution $u$, we have developed GNU Octave code implementing an efficient method outlined previously for computing lattice Green’s functions. Notably, these computations were completed within several minutes on a standard personal computer. For our analyses, we fix the wavenumber at $k = 2$, and truncate the computation of lattice Green's functions for $N = 2271$.

The results of numerical evaluations are presented in \hyperref[fig:real_solution]{Figure \ref*{fig:real_solution}} and \hyperref[fig:abs_solution]{Figure \ref*{fig:abs_solution}}. In \hyperref[fig:real_solution]{Figure \ref*{fig:real_solution}}, each sub-figure depicts a density plot of $\Real{u}$. Sub-figures (a) and (b) correspond to the symmetric mode, while sub-figures (c) and (d) represent the skew-symmetric mode in the coordinates of ${\bZ}^2$ and ${\bR}^2$, respectively. \hyperref[fig:abs_solution]{Figure \ref*{fig:abs_solution}} displays the density plot of $\left| u \right|$, with sub-figures (a) and (b) illustrating the symmetric mode, and sub-figures (c) and (d) depicting the skew-symmetric mode in the respective coordinate systems.

Several key features of the numerical solutions are apparent. As expected, both $\Real{u}$ and $\left| u \right|$ exhibit symmetry, as evidenced by these figures. Moreover, we observe the interference phenomenon of waves.

\begin{figure}[H]
    \centering
    \begin{subfigure}{.49\textwidth}
        \centering
        \includegraphics[width=\textwidth,height=\textheight,keepaspectratio]{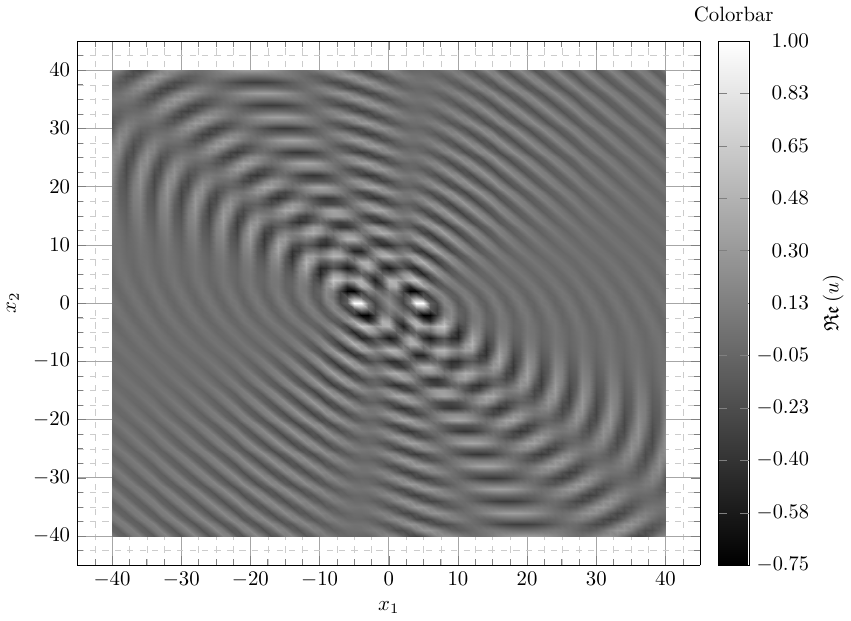}
        \caption{Symmetric mode: the density plot of $\Real{u}$.}
        \label{fig:sym_mode_Z_sq_real}
    \end{subfigure}
    \hfill
    \begin{subfigure}{.49\textwidth}
        \centering
        \includegraphics[width=\textwidth,height=\textheight,keepaspectratio]{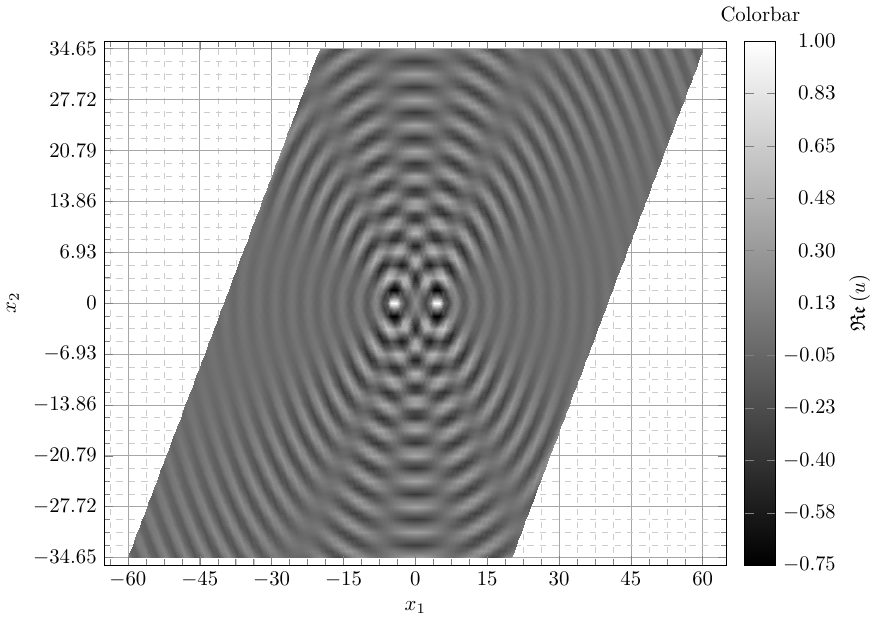}
        \caption{Symmetric mode: the density plot of $\Real{u}$.}
        \label{fig:sym_mode_R_sq_real}
    \end{subfigure}
    \hfill
    \begin{subfigure}{.49\textwidth}
        \centering
        \includegraphics[width=\textwidth,height=\textheight,keepaspectratio]{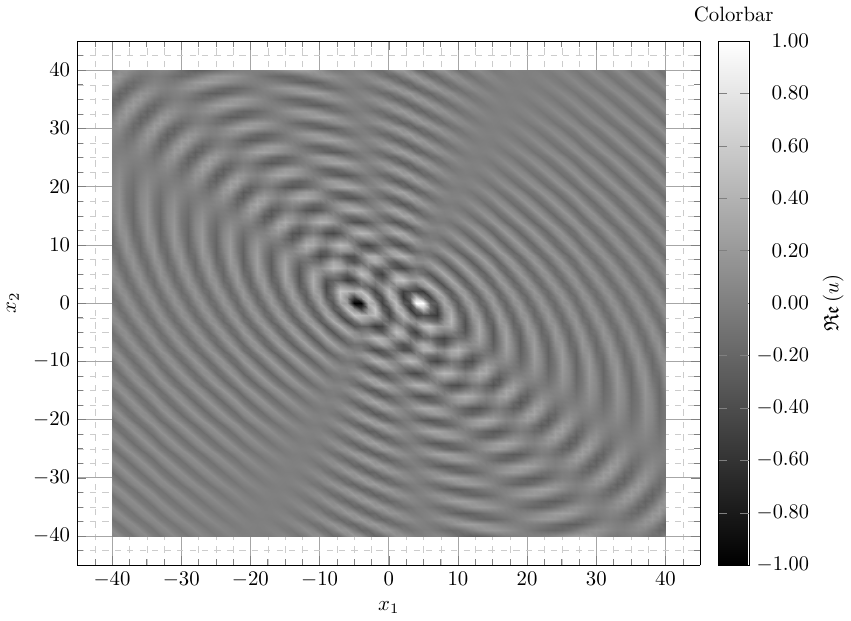}
        \caption{Skew-symmetric mode: the density plot of $\Real{u}$.}
        \label{fig:skew-sym_mode_Z_sq_real}
    \end{subfigure}
    \hfill
    \begin{subfigure}{.49\textwidth}
        \centering
        \includegraphics[width=\textwidth,height=\textheight,keepaspectratio]{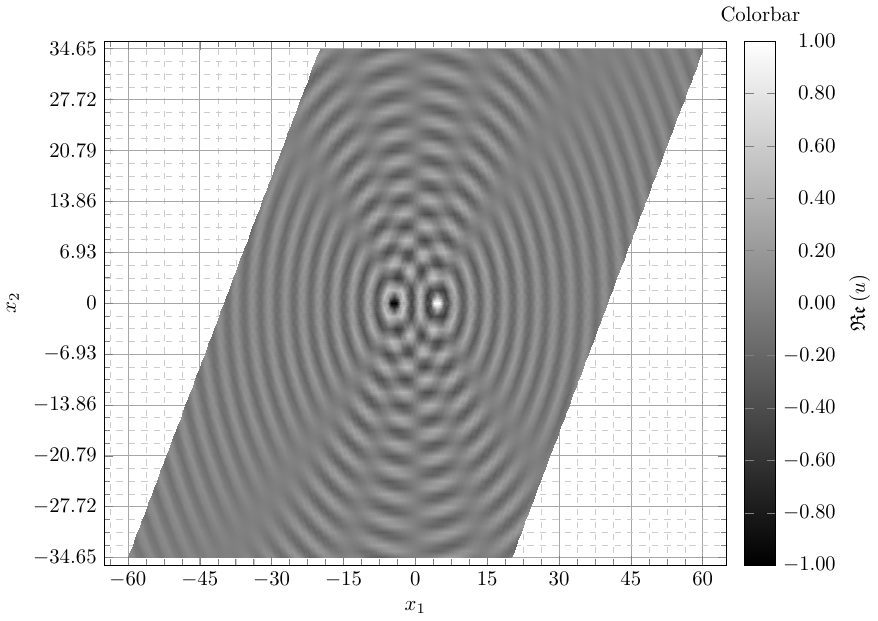}
        \caption{Skew-symmetric mode: the density plot of $\Real{u}$.}
        \label{fig:skew-sym_mode_R_sq_real}
    \end{subfigure}
    \caption{The plots shown in (a) and (c) are represented in ${\bZ}^2$, whereas plots (b) and (d) are graphed on the original coordinates of the triangular lattice, where $k = 2$.}
    \label{fig:real_solution}
\end{figure}

\begin{figure}[H]
    \centering
    \begin{subfigure}{.49\textwidth}
        \centering
        \includegraphics[width=\textwidth,height=\textheight,keepaspectratio]{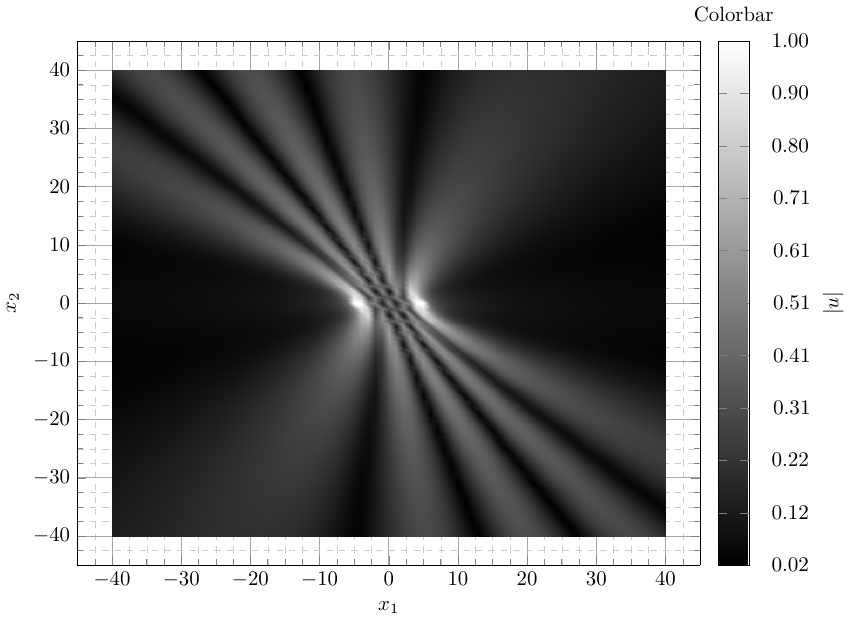}
        \caption{Symmetric mode: the density plot of $\left| u \right|$.}
        \label{fig:fig:sym_mode_Z_sq_abs}
    \end{subfigure}
    \hfill
    \begin{subfigure}{.49\textwidth}
        \centering
        \includegraphics[width=\textwidth,height=\textheight,keepaspectratio]{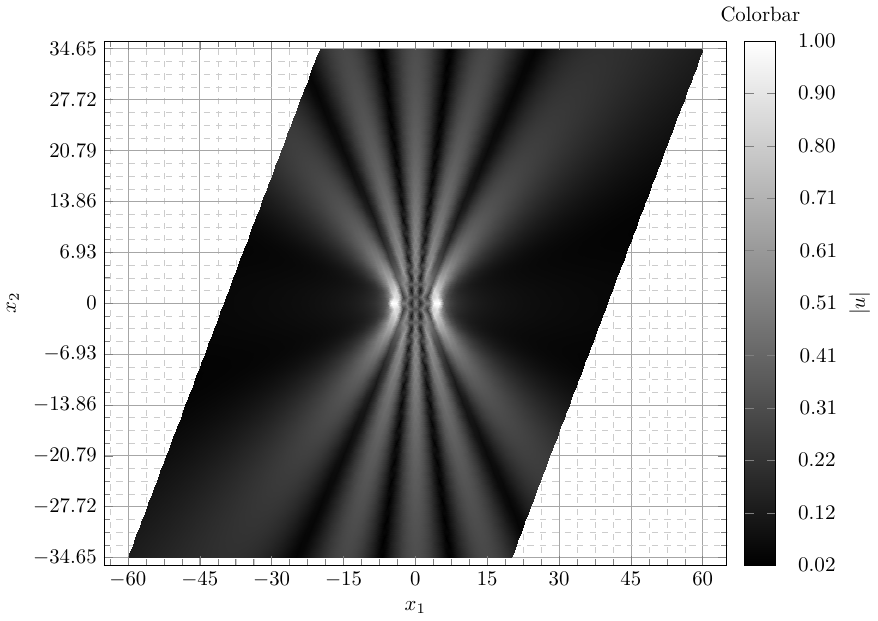}
        \caption{Symmetric mode: the density plot of $\left| u \right|$.}
        \label{fig:fig:sym_mode_R_sq_abs}
    \end{subfigure}
    \hfill
    \begin{subfigure}{.49\textwidth}
        \centering
        \includegraphics[width=\textwidth,height=\textheight,keepaspectratio]{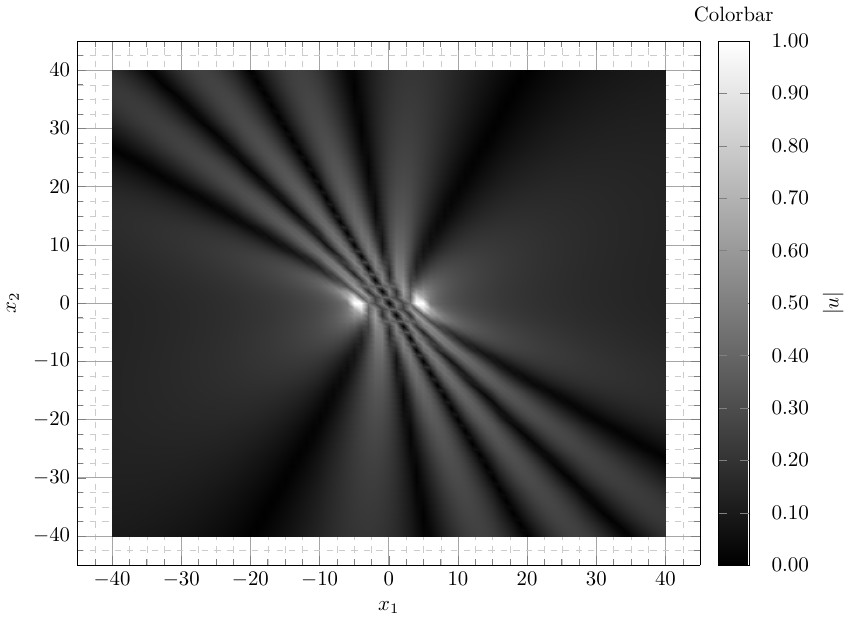}
        \caption{Skew-symmetric mode: the density plot of $\left| u \right|$.}
        \label{fig:skew-sym_mode_Z_sq_abs}
    \end{subfigure}
    \hfill
    \begin{subfigure}{.49\textwidth}
        \centering
        \includegraphics[width=\textwidth,height=\textheight,keepaspectratio]{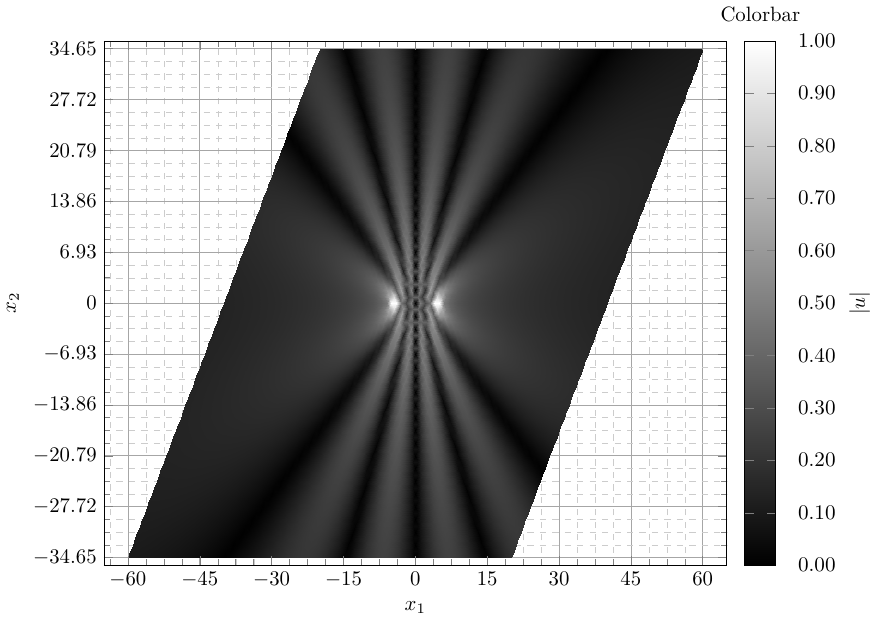}
        \caption{Skew-symmetric mode: the density plot of $\left| u \right|$.}
        \label{fig:skew-sym_mode_R_sq_abs}
    \end{subfigure}
    \caption{The plots displayed in (a) and (c) are presented in the coordinate system of ${\bZ}^2$, whereas plots (b) and (d) are plotted on the initial coordinates of the triangular lattice, where the value of $k$ is equal to $2$.}
    \label{fig:abs_solution}
\end{figure}

Finally, we would like to emphasize that we rely on the cone condition requirement to prove the uniqueness result when applying the unique continuation property. However, this requirement is not always necessary, and we can effectively use the unique continuation property for certain configurations. An example of such a configuration is presented in the following problem:

Consider the problem denoted {\Prp}, where the boundary $\partial\Omega$ is defined as the union of two disjoint sets: ${\Gamma}_{1}$ and ${\Gamma}_{2}$. Specifically, we fix ${\Gamma}_{1} = \left\{ \left( -3,1 \right), \left( -2,1 \right), \left( -1,1 \right), \left( 0,1 \right), \left( 1,1 \right) \right\}$ and ${\Gamma}_{2} = \left\{ \left( -2,-1 \right), \left( -1,-1 \right), \left( 0,-1 \right), \left( 1,-1 \right), \left( 2,-1 \right) \right\}$. The boundary $\partial \Omega$ comprises ten discrete points, herein represented by ${y}_{i}$ ($i = 1,2,\ldots,10$), depicted as red rhombi in \hyperref[fig:examp2_ten_points]{Figure \ref*{fig:examp2_ten_points}}. It is assumed that the function $f\left( {y}_{i} \right) \equiv 1$ remains constant along both parallel line segments. In this case, the coefficient matrix $\cH$ of a linear system of boundary equations \eqref{eq:BS} has dimensions $10 \times 10$. Following the approach used in the previous example, we solve this system using a standard GNU Octave routine. We set the wavenumber to $k = 2$, and truncate the computation of lattice Green's functions for $N = 2271$.
\begin{figure}[H]
    \centering
    \includegraphics[width=0.40\textwidth]{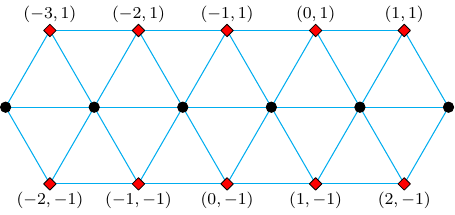}
    \caption{Arrangement of ten discrete boundary points on parallel line segments within a triangular lattice.}
    \label{fig:examp2_ten_points}
\end{figure}

In this case, zeros ({\cf}, end of the proof of \hyperref[th:uni]{Theorem \ref*{th:uni}}) propagate freely between parallel boundaries, yet the uniqueness result persists. The results of numerical evaluations are presented in \hyperref[fig:examp02_real_u]{Figure \ref*{fig:examp02_real_u}}.

\begin{figure}[H]
    \centering
    \begin{subfigure}{.49\textwidth}
        \centering
        \includegraphics[width=\textwidth,height=\textheight,keepaspectratio]{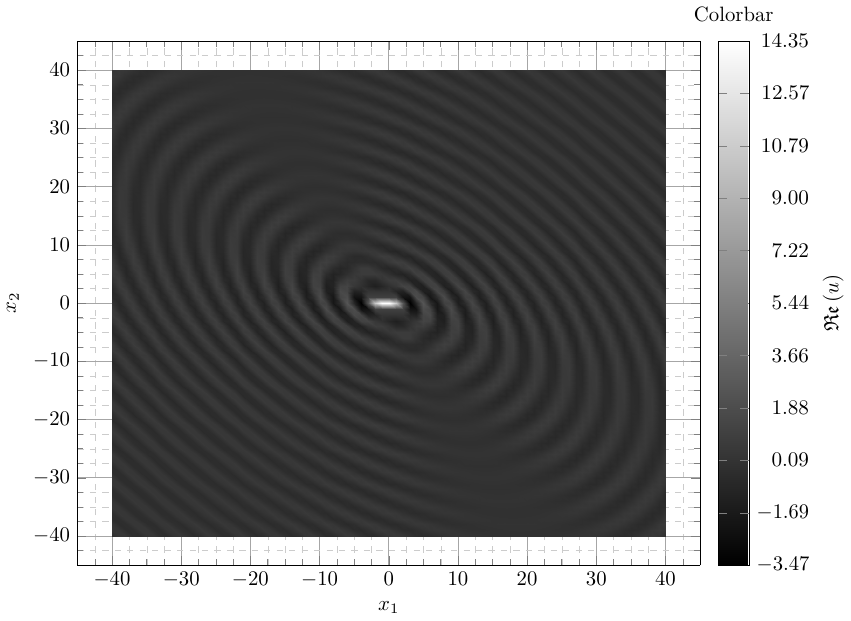}
        \caption{The density plot of $\Real{\cG}$; $\left( {x}_{1},{x}_{2} \right) \in {\bZ}^2$.}
        \label{fig:examp02_real_u_in_Z_sq}
    \end{subfigure}
    \hfill
    \begin{subfigure}{.49\textwidth}
        \centering
        \includegraphics[width=\textwidth,height=\textheight,keepaspectratio]{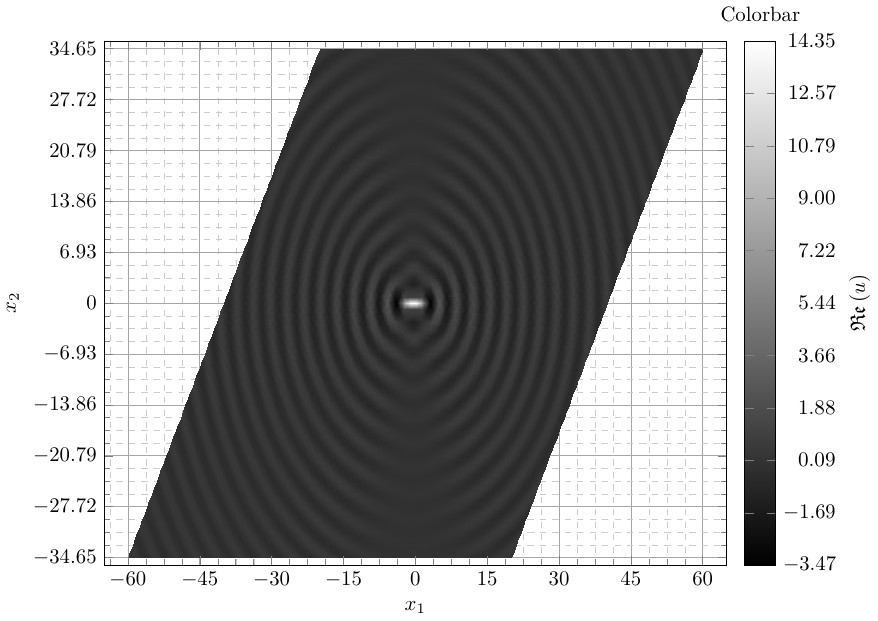}
        \caption{The density plot of $\Real{\cG}$; $\left( {x}_{1},{x}_{2} \right) \in {\bR}^2$.}
        \label{fig:examp02_real_u_in_R_sq}
    \end{subfigure}
    \caption{The plots of the real part of the lattice Green's functions in sub-figures (a) and (b) are presented in both the transformed and original coordinates of the triangular lattice, respectively, wherein $k = 2$.}
    \label{fig:examp02_real_u}
\end{figure}

\section{Discussion}\label{sec:discussion}

In this paper, we extend our investigation of the discrete Helmholtz equation and its associated exterior problems within a two-dimensional triangular lattice, focusing on two main questions: the effectiveness of the numerical methods used to evaluate the Green's function, including the case $k=2$, and the necessity of the cone condition. It is important to note that the computational accuracy of the solution representation formula \eqref{eq:sol1} for the discrete Helmholtz equation principally relies on the computation of lattice Green's functions. To illustrate these points numerically, we investigate two sample problems. The first sample problem involves four discrete boundary points arranged along a single line, while the second sample problem consists of ten discrete boundary points arranged on parallel line segments.

Various numerical quadrature techniques were attempted to handle the double integrals \eqref{eq:gmn} for ${k}^{2} + \imath\varepsilon$. In the sense of the existence of these integrals, instead of \eqref{eq:gmn} the following integrals \eqref{eq:pointwise} are considered (for details, see \hyperref[sec:Green's_represent_form]{Section \ref*{sec:Green's_represent_form}}). However, this approach turned out to be disadvantageous due to the rapid oscillations of the integrands as the magnitudes of ${x}_{1}$ and ${x}_{2}$ increase significantly from the origin. For the numerical computation of the lattice Green's functions, we employed a technique originated in the article \cite{Berciu2010}. Our numerical experiments demonstrate that this approach effectively approximates the lattice Green's functions, showing close agreement with physical solutions. Furthermore, this approach indicates stability properties when computations are initiated from a large Manhattan distance (refer to \hyperref[tab:Experim_Converg]{Table \ref*{tab:Experim_Converg}}).

In the second sample problem, despite not satisfying the cone condition, the uniqueness result is still attained. Numerical experiments revealed that as more boundary points are aligned on parallel lines (typically, the boundary points follow the configuration illustrated in \hyperref[fig:examp2_ten_points]{Figure \ref*{fig:examp2_ten_points}}), the determinant of the coefficient matrix $\cH$ of the linear system of boundary equations \eqref{eq:BS} approaches zero. In the given scenario, it is observed that $\left| \det\left( \cH \right) \right| \approx \pgfmathprintnumber[std,std=-1:0,fixed zerofill,sci zerofill,sci e,precision=4,1000 sep={}]{5.2308888861181847e-06}$. Concurrently, the condition number of the matrix $\cH$, with respect to the matrix norm induced by the (vector) Euclidean norm, is given by ${\kappa}_{2}\left( \cH \right) = {\left\| {\cH}^{-1} \right\|}_{2} {\left\| \cH \right\|}_{2} \approx \pgfmathprintnumber[std,std=-1:0,fixed zerofill,sci zerofill,sci e,precision=4,1000 sep={}]{1.533136475938260e+01}$. The fact that the determinant of the coefficient matrix $\cH$ is of the order of ${10}^{-6}$ suggests that the matrix is nearly singular. However, with a condition number less than $16$, it indicates that despite the near singularity of the matrix $\cH$, the system \eqref{eq:BS} remains relatively stable and well-behaved.

Another interesting question pertains to the structure of the space of radiating solutions. We focus our attention on the space ${\ell}_{\mathrm{R}}^{\infty}\left( \Omega \right)$, a Banach space comprising all bounded sequences on $\Omega \subset {\bZ}^{2}$ that satisfy the prescribed radiation condition \eqref{eq:radcond}. Depending on the specific objectives of the investigation, various spaces on lattices can be considered, as detailed in \cite{AndoIsozakiMorioka2016,ParraRichard2018}, which delves into the spectral properties of discrete Schr\"{o}dinger operators within lattice frameworks.

\section*{Acknowledgement}\label{sec:acknowledgement}
The authors wish to express their gratitude to Prof. Dr. Jemal Rogava for his helpful remarks concerning the numerical computation section of this article.

This work was supported by the Shota Rustaveli National Science Foundation of Georgia (SRNSFG) [grant number: FR-21-301, project title: ``Metamaterials with Cracks and Wave Diffraction Problems''].

\appendixpage
\begin{appendices}\label{append:main_sect}
	\section{Describing of assembling of sparse matrices}\label{append:sparse_matrix}
        We introduce a set of functions for constructing sparse matrices ${\alpha}_{n}\left( k \right)$, ${\beta}_{n}\left( k \right)$, and ${\gamma}_{n}\left( k \right)$, employing a language-agnostic approach. This procedure is delineated into two distinct algorithms: \hyperref[alg:matrix_assembly_part1]{Algorithm \ref*{alg:matrix_assembly_part1}} outlines the procedure for generating these sparse matrices when $n$ is even, while \hyperref[alg:matrix_assembly_part2]{Algorithm \ref*{alg:matrix_assembly_part2}} delineates the process for odd values of $n$. Specifically, the functions \textsc{Alpha2}$\left( p,k \right)$, \textsc{Beta2}$\left( p,k \right)$, and \textsc{Gamma2}$\left( p,k \right)$ in \hyperref[alg:matrix_assembly_part1]{Algorithm \ref*{alg:matrix_assembly_part1}} correspond to the computation of sparse matrices ${\alpha}_{2p}\left( k \right)$, ${\beta}_{2p}\left( k \right)$, and ${\gamma}_{2p}\left( k \right)$, respectively. Furthermore, the functions \textsc{Alpha1}$\left( p,k \right)$, \textsc{Beta1}$\left( p,k \right)$, and \textsc{Gamma1}$\left( p,k \right)$ in \hyperref[alg:matrix_assembly_part2]{Algorithm \ref*{alg:matrix_assembly_part2}} clarify the computations of matrices ${\alpha}_{2p + 1}\left( k \right)$, ${\beta}_{2p + 1}\left( k \right)$, and ${\gamma}_{2p + 1}\left( k \right)$, respectively.

        In \hyperref[tab:size_and_nnz_sparse_matrices]{Table \ref*{tab:size_and_nnz_sparse_matrices}}, we present the size and the number of nonzero-valued elements of these sparse matrices for each case.
        
        \begin{table}[H]
		\centering


\begin{tabular}{ccc}
	\toprule
	\textbf{Matrix} & \textbf{Size} & \textbf{Number of nonzero elements} \\
	\midrule
        ${\alpha}_{2p}\left( k \right)$ & $\left( p + 1 \right) \times p$ & $2p$ \\
        ${\beta}_{2p}\left( k \right)$ & $\left( p + 1 \right) \times \left( p + 1 \right)$ & $2p + 1$ \\
        ${\gamma}_{2p}\left( k \right)$ & $\left( p + 1 \right) \times \left( p + 1 \right)$ & $3p + 1$ \\
        ${\alpha}_{2p + 1}\left( k \right)$ & $\left( p + 1 \right) \times \left( p + 1 \right)$ & $2p + 1$ \\
        ${\beta}_{2p + 1}\left( k \right)$ & $\left( p + 1 \right) \times \left( p + 2 \right)$ & $2p + 2$ \\
        ${\gamma}_{2p + 1}\left( k \right)$ & $\left( p + 1 \right) \times \left( p + 1 
	\right)$ & $3p + 1$ \\
	\bottomrule
\end{tabular}
		\caption{The dimensions of sparse matrices and their corresponding nonzero elements in computing lattice Green's functions.}
		\label{tab:size_and_nnz_sparse_matrices}
	\end{table}
        
	\begin{algorithm}
		\caption{Assembly of sparse matrices ${\alpha}_{2p}\left( k \right)$, ${\beta}_{2p}\left( k \right)$, and ${\gamma}_{2p}\left( k \right)$.}
		\label{alg:matrix_assembly_part1}
		\begin{algorithmic}[1]
			\Require $p$ (positive integer), $k$  \Comment{Input parameters: $p \geq 1$ and $k \in \left( 0,2\sqrt{2} \right)$.}
			\Statex
			\Function{Alpha2}{$p, k$}
			\State Initialize ${\alpha}_{2}$ as a $\left( p + 1 \right) \times p$ matrix with all entries set to zero.
			\For{$i \gets 1$ to $p$}
			\State ${\alpha}_{2}\left[ i,i \right] \gets 1$ \Comment{Set diagonal entries to $1$.}
			\If{$i \geq 2$}
			\State ${\alpha}_{2}\left[ i,i - 1 \right] \gets 1$ \Comment{Set sub-diagonal entries to $1$.}
			\EndIf
			\EndFor
			\State ${\alpha}_{2}\left[ p + 1, p \right] \gets 2$ \Comment{Set the $\left( p + 1, p \right)$ entry to $2$.}
			\State \textbf{return} ${\alpha}_{2}$ \Comment{Return the computed matrix ${\alpha}_{2}$.}
			\EndFunction
			\Statex
			\Function{Beta2}{$p, k$}
			\State Initialize ${\beta}_{2}$ as a $\left( p + 1 \right) \times \left( p + 1 \right)$ matrix with all entries set to zero.
			\State ${\beta}_{2}\left[ p + 1,p + 1 \right] \gets 2$ \Comment{Set the value of the bottom-right corner entry to $2$.}			
                \State ${\beta}_{2}\left[ 1,2 \right] \gets {\beta}_{2}\left[ p + 1,p + 1 \right]$ \Comment{Copy the value from the bottom-right corner entry to entry $\left( 1, 2 \right)$.}
                \For{$i \gets 1$ to $p$}
			\State ${\beta}_{2}\left[ i,i \right] \gets 1$ \Comment{Set diagonal entries to $1$.}
			\If{$i \geq 2$} \Comment{Check if $i$ is greater than or equal to $2$.}
			\State ${\beta}_{2}\left[ i,i + 1 \right] \gets 1$ \Comment{Set super-diagonal entries to $1$.}
			\EndIf
			\EndFor
			\State \textbf{return} ${\beta}_{2}$ \Comment{Return the computed matrix ${\beta}_{2}$.}
			\EndFunction
			\Statex
			\Function{Gamma2}{$p, k$}
			\State Initialize ${\gamma}_{2}$ as a $\left( p + 1 \right) \times \left( p + 1 \right)$ matrix with all entries set to zero.
			\State ${\gamma}_{2}\left[ 1,2 \right] \gets -2$ \Comment{Set the $\left( 1, 2 \right)$ entry to $-2$.}			
                \State ${\gamma}_{2}\left[ p + 1,p \right] \gets {\gamma}_{2}\left[ 1,2 \right]$ \Comment{Copy the value from entry $\left( 1, 2 \right)$ to entry $\left( p+1, p \right)$.}
                \For{$i \gets 1$ to $p + 1$}
			\State ${\gamma}_{2}\left[ i,i \right] \gets 6 - k^2$ \Comment{Set diagonal entries to $6 - k^2$.}
			\If{$i \geq 2$ \textbf{and} $i \leq p$} \Comment{Set the off-diagonal elements if applicable.}
			\State ${\gamma}_{2}\left[ i,i + 1 \right] \gets -1$ \Comment{Set super-diagonal entries to $-1$.}
			\State ${\gamma}_{2}\left[ i,i - 1 \right] \gets -1$ \Comment{Set sub-diagonal entries to $-1$.}
			\EndIf
			\EndFor
			\State \textbf{return} ${\gamma}_{2}$ \Comment{Return the computed matrix ${\gamma}_{2}$.}
			\EndFunction
		\end{algorithmic}
	\end{algorithm}
	\clearpage
	\begin{algorithm}
		\caption{Assembly of sparse matrices ${\alpha}_{2p + 1}\left( k \right)$, ${\beta}_{2p + 1}\left( k \right)$, and ${\gamma}_{2p + 1}\left( k \right)$.}
		\label{alg:matrix_assembly_part2}
		\begin{algorithmic}[1]
			\Require $p$ (non-negative integer), $k$ \Comment{Input parameters: $p \geq 0$ and $k \in \left( 0,2\sqrt{2} \right)$.}
			\Statex
			\Function{Alpha1}{$p, k$}
			\State Initialize ${\alpha}_{1}$ as a $\left( p + 1 \right) \times \left( p + 1 \right)$ matrix with all entries set to zero.
			\For{$i \gets 1$ to $p + 1$}
			\State ${\alpha}_{1}\left[ i,i \right] \gets 1$  \Comment{Set diagonal entries to $1$.}
			\If{$i \geq 2$} \Comment{Set the sub-diagonal elements if applicable.}
			\State ${\alpha}_{1}\left[ i,i - 1 \right] \gets 1$ \Comment{Set sub-diagonal entries to $1$.}
			\EndIf
			\EndFor
			\State \textbf{return} ${\alpha}_{1}$ \Comment{Return the computed matrix ${\alpha}_{1}$.}
			\EndFunction
			\Statex
			\Function{Beta1}{$p, k$}
			\State Initialize ${\beta}_{1}$ as a $\left( p + 1 \right) \times \left( p + 2 \right)$ matrix with all entries set to zero.
			\State ${\beta}_{1}\left[ 1,2 \right] \gets 2$  \Comment{Set the $\left( 1, 2 \right)$ entry to $2$.}
			\For{$i \gets 1$ to $p + 1$}
			\State ${\beta}_{1}\left[ i,i \right] \gets 1$ \Comment{Set diagonal entries to $1$.}
			\If{$i \geq 2$} \Comment{Set the super-diagonal elements if applicable.}
			\State ${\beta}_{1}\left[ i,i + 1 \right] \gets 1$  \Comment{Set super-diagonal entries to $1$.}
			\EndIf
			\EndFor
			\State \textbf{return} ${\beta}_{1}$ \Comment{Return the computed matrix ${\beta}_{1}$.}
			\EndFunction
			\Statex
			\Function{Gamma1}{$p, k$}
			\State Initialize ${\gamma}_{1}$ as a $\left( p + 1 \right) \times \left( p + 1 \right)$ matrix with all entries set to zero.
			\If{$p = 0$} \Comment{Consider the case when ${\gamma}_{1}$ is a $1 \times 1$ matrix.}
                \State ${\gamma}_{1}\left[ p + 1,p + 1 \right] \gets 4 - k^2$ \Comment{Set the only entry at position $\left( 1,1 \right)$ to $4 - k^2$.}
                \Else \Comment{For non-zero values of $p$.}
			\State ${\gamma}_{1}\left[ p + 1,p + 1 \right] \gets 5 - k^2$ \Comment{Set the $\left( p + 1,p + 1 \right)$ entry to $5 - k^2$.}
			\State ${\gamma}_{1}\left[ 1,2 \right] \gets -2$ \Comment{Set the $\left( 1, 2 \right)$ entry to $-2$.}
			\State ${\gamma}_{1}\left[ p + 1,p \right] \gets -1$ \Comment{Set the $\left( p + 1,p \right)$ entry to $-1$.}
			\For{$i \gets 1$ to $p$}
			\State ${\gamma}_{1}\left[ i,i \right] \gets 6 - k^2$ \Comment{Set diagonal entries to $6 - k^2$.}
			\If{$i \geq 2$}  \Comment{Set the off-diagonal elements if applicable.}
			\State ${\gamma}_{1}\left[ i,i + 1 \right] \gets -1$ \Comment{Set super-diagonal entries to $-1$.}
			\State ${\gamma}_{1}\left[ i,i - 1 \right] \gets -1$ \Comment{Set sub-diagonal entries to $-1$.}
			\EndIf
			\EndFor
			\EndIf
			\State \textbf{return} ${\gamma}_{1}$ \Comment{Return the computed matrix ${\gamma}_{1}$.}
                \EndFunction
		\end{algorithmic}
	\end{algorithm}
\end{appendices}

\def\printchapternonum{}
\bibliographystyle{plainnat}
\bibliography{bibsource}

\end{document}